\documentclass{gNST2e}

\RequirePackage[OT1]{fontenc}
\usepackage{amsfonts,amsmath,amssymb,amsthm,amscd,stmaryrd}
\usepackage{dsfont}
\usepackage{multirow}
\RequirePackage[colorlinks,citecolor=blue,urlcolor=blue]{hyperref}
\usepackage{graphicx}
\usepackage{tabularx}
\usepackage{booktabs}
\usepackage{verbatim}
\usepackage{tikz}

\numberwithin{equation}{section}
\theoremstyle{plain}
\newtheorem{thm}{Theorem}[section]

\newtheorem{lem}[thm]{Lemma}
\newtheorem{corrol}[thm]{Corollary}
\newtheorem{example}[thm]{Example}

\setlength{\tabcolsep}{4pt}
\def \P {\mathbb{P}}
\def \E {\mathbb{E}}
\def \R {\mathbb{R}}
\def \V {\mathbb{V}}
\def \N {\mathbb{N}}
\def \C {\mathbb{C}\text{ov}}
\def \bnk {b\left(n/k\right)}
\def \bn2 {b\left(\frac{n}{k}\right)}
\def \ank {a\left(n/k\right)}
\def \an2 {a\left(\frac{n}{k}\right)}

\def \N {\mathbb{N}}
\def \a {\alpha}

\def \l {\lambda}
\def \d {\delta}
\def \e {\varepsilon}

\def \S {\mathcal{S}}

\def \x {\mathbf{x}}

\DeclareMathOperator*{\argmin}{argmin}

\begin{document}
\title{On Tail Index Estimation based on Multivariate Data} 

\maketitle

\author{A. Dematteo $^{\rm a}$ $^{\ast}$\thanks{$^\ast$Corresponding author. Email: antoine.dematteo@telecom-paristech.fr}
\vspace{6pt} and S. Cl\'{e}men\c{c}on$^{\rm b}$\\\vspace{6pt}  $^{a}${\em{Telecom ParisTech, 46 rue
Barrault, 75634 Paris cedex 13, France. \\
           \printead{e1,e2}}};
$^{b}${\em{GazTransport \& Technigaz (GTT), 1 route de Versailles, 78400 Saint-R\'emy-l\`es-Cheuvreuse, France}}\\}

\begin{abstract}
  This article is devoted to the study of tail index estimation based on i.i.d. multivariate observations, drawn from a standard heavy-tailed distribution, \textit{i.e.} of which Pareto-like marginals share the same tail index. A multivariate Central Limit Theorem for a random vector, whose components correspond to (possibly dependent) Hill estimators of the common tail index $\alpha$, is established under mild conditions. Motivated by the statistical analysis of extremal spatial data in particular, we introduce the concept of (standard) heavy-tailed random field of tail index $\alpha$ and show how this limit result can be used in order to build an estimator of $\alpha$ with small asymptotic mean squared error, through a proper convex linear combination of the coordinates. Beyond asymptotic results, simulation experiments illustrating the relevance of the approach promoted are also presented.
\end{abstract}

\begin{keywords} Heavy-tail modelling; multivariate statistics; tail index estimation;  Hill estimator; aggregation
\end{keywords}

\begin{classcode}62G32;91B30\end{classcode}

\section{Introduction}
It is the main purpose of this paper to provide a sound theoretical framework for risk assessment, when dangerous events coincide with the occurrence of extremal values of a random field with an intrinsic heavy-tail behaviour. The theory of regularly varying functions provides a semi-parametric framework, with the ability to give an appropriate description of heavy-tail phenomena. In risk assessment, it is the main mathematical tool to carry out worst-case risk analyses in various fields. It is widely used for risk quantification in Finance (\citet{RFM}), Insurance (\citet{mikosch}) or for the modelling of natural hazards, see \citet{Tawn92} or \citet{Coles94}. \citet{Hult05} introduce the regularly varying processes of $D\left([0,1],\R^d\right)$, the space of right-continuous functions from $[0,1]$ in $\R^d$ with left-limit. The present article consider random fields observed  on a lattice with an intrinsic marginal heavy-tail behaviour with tail index $\a>0$. The parameter $\a$ governing the extremal behaviour of the marginals of the random field, we consider the problem of estimating it. Whereas a variety of statistical techniques for tail index estimation have been proposed in the univariate setup (see Chap. 6 in \citet{ekm} for instance), focus is here on extension of the popular Hill inference method (see \citet{Hill1975}). Incidentally, we point out that the analysis carried out in this paper can be extended to alternative estimation procedures.

In the univariate i.i.d. case, several authors investigated the asymptotic normality of the Hill estimator under various assumptions, including  \citet{Davis1984}, \citet{Beirlant89} or \citet{Teugels85}. In a primary work, \citet{Hsing91} showed a central limit theorem in a weak dependent setting under suitable mixing and stationary conditions. Recently, these conditions have been considerably weakened in \citet{Hill10}.
 Here, the framework we consider is quite different. The data analysed correspond to i.i.d observations of a random field on a compact set $\mathcal{S}\subset \mathbb{R}^d$ with
  $d\geq 1$ and where each margin has the same tail index. Precisely, the random field is observed on a lattice $s_1,\; \ldots,\; s_l$: to each vertex of the lattice corresponds a sequence of $n\geq 1$ i.i.d. random variables with tail index $\a$, the collection of sequences being not independent.  Denoting by $H^{(i)}_{k_i,n}$ the Hill estimator using the $k_i$ largest observations at location $s_i$, $1\leq i\leq d$, the accuracy of the estimator $H_{k_i,n}^{(i)}$ is known to depend dramatically on $k_i$. There are several ways to choose this parameter, based on the \textit{Hill plot} ($k_i$ is picked in a region where the plot looks flat or on resampling procedures for instance, see \citet{Danielsson2001}).
Eventually, the \textit{optimal }$k_i$'s are likely to be different, depending highly on the location $s_i$. Here, we consider the issue of accurately estimating the parameter $\a$ based on the collection of estimators $H^{(1)}_{k_1,n},\; \ldots,\; H^{(l)}_{k_l,n}$ and investigate the advantage of suitably chosen convex linear combination of the local estimates over a simple uniform average. The study is based on a limit theorem established in this paper, claiming that $\sqrt{k_1}\left(H^{(1)}_{k_1,n}-1/\alpha,\dots,H^{(l)}_{k_l,n}-1/\alpha\right)$ is asymptotically Gaussian under mild assumptions, together with the computation of an estimate of the asymptotic covariance matrix. These  results can be used to derive the limit distribution of any linear combination of the local estimators and, as a byproduct, to find an optimal convex linear combination regarding the asymptotic mean squared error (AMSE). For illustration purpose, experimental results are also presented in this article, supporting the use of the approach promoted, for risk assessment in the shipping industry in particular.

The paper is organized as follows. In section \ref{background}, we briefly recall some theoretical results about the concept of regular variations. In section \ref{MainTh}, the main results of this paper are stated, establishing in particular the asymptotic normality of the multivariate statistic $H_n$ whose components coincide with local Hill estimators, and explaining how to derive a tail index estimator with minimum AMSE. The simulation results are provided in section \ref{simulations}. Numerical results are displayed in section \ref{sloshing}, while technical proofs are postponed to the Appendix section.

\section{Background and Preliminaries}
\label{background}

We start off with some background theory on regular variation and the measure of extremal dependence. Next, we  briefly recall the classical Hill approach to tail index estimation in the context of i.i.d. univariate data drawn from a heavy-tailed distribution. The indicator function of any event $\mathcal{E}$ is denoted by $\mathds{1}(\mathcal{E})$. For all $\mathbf{x}=(x_1,\;\ldots,\; x_l)\in \mathbb{R}^l$, the cartesian product $\prod_{i=1}^l(x_i,+\infty]$ is denoted by $(\mathbf{x},+\infty]$. In addition, all operations in what follows are meant component-wise, \textit{e.g.} $1/\mathbf{k}=(1/k_1,\; \ldots,\; 1/k_l)$ for any $\mathbf{k}=(k_1,\; \ldots,\; k_l)\in \mathbb{N}^{*l}$. The covariance  of two square integrable random variables $X$ and $Y$ is denoted by $\C[X,Y]$ and $\C[X,X]= \V[X]$.

\subsection{Heavy-tailed random variables}
By definition, heavy-tail phenomena are those which are ruled by very large values, occurring with a far from negligible probability and with significant impact on the system under study. When the phenomenon of interest is described by the distribution of a univariate r.v., the theory of regularly varying functions provides the appropriate mathematical framework for heavy-tail analysis. For clarity's sake and in order to introduce some notations which shall be widely used in the sequel, we recall some related theoretical background. One may refer to \citet{Resnick1987} for an excellent account of the theory of regularly varying functions and its application to heavy-tail analysis.
\par Let $\a>0$, we set
 $$\mathcal{RV}_{\a}=\{U:\R_+\rightarrow\R_+\text{ borelian } |\lim_{t\rightarrow\infty}\frac{U(tx)}{U(t)}=x^{\a},\quad \forall x > 0 \}$$
the space of regularly varying functions (at infinity) with tail-index $\a$. Let $X$ be a random variable with cumulative distribution function (\textit{cdf} in short) $F$ and survival function $\overline{F}=1-F$. It is said to belong to the set of random variables with a heavy tail of index $\a$ if $\overline{F}\in\mathcal{RV}_{-\a}$. In addition, the heavy-tail property can be classically formulated in terms of vague convergence to a homogeneous Radon measure. Indeed, the random variable $X$ is heavy-tailed with index $\a$ if and only if:
$$
n\mathbb{P}\left(X/F^{-1}(1-1/n)\in \cdot  \right) \xrightarrow[n\rightarrow\infty]{v} \nu_{\alpha}(\cdot) \text{ in } M_+(0,\, \infty],
$$
where $\xrightarrow[n\rightarrow\infty]{v}$ denotes vague convergence (the reader may refer to \citet[chap 3.]{resnick2007heavy} for further details), $F^{-1}(u)=\inf\{t:\; F(t)\geq u\}$ denotes $F$'s generalized inverse, $\nu_{\alpha}(x,\, \infty]=x^{-\alpha}$, $M_+(0,\, \infty]$ the set of nonnegative Radon measures on $[0,\, \infty]^d\backslash\{0\}$.
\par Based on this characterization, the heavy-tail model can be straightforwardly extended to the multivariate setup. Now, consider a $d$-dimensional random vector $\mathbf{X} = (X_1,\dots,X_d)$ taking its values in $\mathbb{R}_+^d$ and where each margin has the same tail index $\a$. It is said to be a  \textit{standard} heavy tailed random vector with tail index $\a>0$ if there exists a non null positive Radon measure $\nu$ on  $[0,\, \infty]^d\backslash\{0\}$ such that:
\begin{equation}
\label{RV}
x\P\left( \left(\frac{X_1}{a^{(i)}(x)},\dots,\frac{X_d}{a^{(d)}(x)}\right)\in \cdot\right) \xrightarrow[x\rightarrow\infty]{v} \nu(\cdot),
\end{equation}
where for $i=1\dots d$, $a^{(i)}:x\mapsto F_{i}^{-1}(1-1/x)$ and $F_i$ is the cdf of the $i$th component. In such a case, $\nu$ fulfils the homogeneity property $\nu(tC) = t^{-\a}\nu(C)$ for all $t>0$ and any Borel set $C$ of $[0,\, \infty]^d\backslash\{0\}$, and all components are tail equivalent: $1-F_i\in \mathcal{RV}_{-\a}$ for $i=1,\;\ldots,\; d$.

\subsection{The Hill method for tail index estimation}
A variety of estimators of $\a$ have been proposed in the statistical literature in the context of univariate i.i.d. observations drawn from a heavy-tailed distribution, see \citet{Hill1975, Pickands_1975, deHaan1980, Mason1982, Davis1984, Csorgo1985, Dekkers89}. In this paper, focus is on the popular Hill estimator but ideas to extend this work to other estimators are introduced in Appendix section \ref{extension}. The Hill estimator is defined as follows. Let $X_1,\; \ldots,\; X_n$ be observations drawn from a heavy-tailed probability distribution with tail index $\a$ and denote $X(1)\geq \ldots\geq X(n)$ the corresponding order statistics. Let $k$ such that $k\rightarrow\infty$ and $k/n\rightarrow 0$; the Hill estimator of the shape parameter $1/\a$ based on the $k$-largest observations is given by 

$$H_{k,n}=\frac{1}{k}\sum_{i=1}^{k}\log\left( \frac{X(i)}{X(k+1)}\right)=\int_{1}^{\infty}\frac{1}{k}\sum_{i=1}^{n}\mathds{1}\left(\frac{X_i}{X(k+1)}>x\right)\frac{dx}{x}.$$

The asymptotic behaviour of this estimator has been extensively investigated. Weak consistency is shown by \citet{Mason1982} for i.i.d. sequences, by \citet{Hsing91} for weakly dependent sequences, and by \citet{Starica1997} for linear processes. Strong consistency is proved in \citet{Deheuvels88} in the i.i.d. setup under the additional assumption that $k/\log\log n\rightarrow \infty$. The asymptotic normality with a deterministic centring by $1/\a$ requires additional assumptions on the distribution $F$ of $X$ and has been established in \citet{Teugels85,deHaan98,Geluk97,Hill10}. In this case,  
$$\sqrt{k}\left(H_{k,n}-1/\a\right)\Rightarrow \mathcal{N}\left(0,1/\a^2\right),$$
where $\Rightarrow$ means convergence in distribution, and $k$ is a function of $n$.  However, depending on the choice of $k$ and on the property of $F$ regarding second order regular variation, the Hill estimator can be significantly biased. This is studied for instance in \citet{deHaan98b}.

Hence, the practical issue of choosing $k$  is particularly important and has been addressed in various papers. They mostly rely on the second order regular variations and seek to achieve the best trade-off between bias and variance. \citet{Drees98} derive  a sequential estimator of the optimal $k$ that does not require prior knowledge of the second order parameters. In \citet{Danielsson2001} a subsamble bootstrap procedure is proposed, where the sample fraction that minimizes the asymptotic mean-squared error is adaptively determined. Graphical procedures are also available. In \citet{Drees2000} the popular Hill plot is compared to the AltHill Plot that is proved to be more accurate if $F$ is not strictly Pareto.

\section{Tail index estimation for a heavy-tailed random vector}
\label{MainTh}
Consider a heavy-tailed random vector $\mathbf{X}=\big(X_{1},\dots,X_{l}\big)$, $l>1$. Although it is not assumed that all the margins have the same distribution, we suppose that share the same (unknown) tail index $\alpha$ and that we have $n$ observations $\big((X_{1,i},\dots, X_{l,i})\big)_{i=1\dots n}$ of the vector $\mathbf{X}$.	

In order to state the main results of the paper, we introduce some additional notations. Denote respectively by $F$ and $\overline{F}$ the cdf and the survival function of the r.v. $\mathbf{X}=(X_1,\; \ldots,\; X_l)$ and by $F_i$ and $\overline{F}_i$ those of $X_i$, $i=1,\ldots,l$. Here and throughout, for $i\in\{1,\dots,l\}$, $X_i(1)>\ldots> X_i(n)$ are the order statistics related to the sample $(X_{i,1},\dots,X_{i,n})$ and $H^{(i)}_{k,n}$ is the Hill estimator based on the $k$-largest values observed at location $i$. The quantile of order $(n-k+1)/n$ of $F_i$ is denoted by $a^{(i)}(n/k)$ and we set $\mathbf{a}(n/k)=\big(a^{(1)}(n/k),\dots,a^{(l)}(n/k)\big)$. Finally, recall that there exists a Radon measure $\nu$ such that the following convergence holds true (see Chapter 6 in \citet{resnick2007heavy} for instance):
\begin{equation}
\label{exponent}
\frac{n}{k}\P\left(\frac{X_{1}}{a^{(1)}(n/k)}>x_1,\dots,\frac{X_{l}}{a^{(l)}(n/k)}>x_l\right)\xrightarrow[n\rightarrow\infty]{v} \nu\big(\left(\mathbf{x},+\infty\right]\big),
\end{equation}
where $\mathbf{x}=(x_1,\dots,x_l)\in\R_+^l$. In the sequel, $\nu\big(\mathbf{x}\big)$ will abusively stand  for $\nu\big(\left(\mathbf{x},+\infty\right]\big)$. We also set $\nu_{i,j}(x_i,x_j)$ as the limit of $\nu(x_1,\dots,x_l)$ when all the components but the $i-th$ and the $j-th$ tend to 0. We assume that all these limits exist for any choice of $i$ or $j$.

We point out that all the results of this section can be extended to alternative estimators of the tail index, such as those studied in \citet{Dekkers89} (Moment estimator) or \citet{Vries1996} ( Ratio of moments estimator). Technical details are deferred to Appendix section \ref{extension}.

It is the goal pursued in this section to show how to combine, in a linear and convex fashion, the local Hill estimators in order to refine the estimation of $\a$ in the AMSE sense.

\subsection{A multivariate functional central limit theorem}
As a first go, we start with recalling a result of \citet[Proposition 3]{deHaan93}, which can also be found in \citet[lemma 3.1]{Einmahl97}, stating the convergence of the tail empirical process toward a Gaussian process. This result is next used to prove a Central Limit Theorem with a random  centring for the random vector whose components correspond to the local Hill estimators, the latter being all viewed as functionals of the tail empirical process. Under some additional assumptions, the random centring is removed and replaced by $1/\a$. The case where the number of observations involved in the local Hill estimator components depends on the location considered is also dealt with. The main application of this result is that the Hill estimator can be replaced by an alternative estimator with a smaller asymptotic mean squared error.

\begin{thm}
\label{GaussField}{\sc (A Functional Central Limit Theorem)}
Equipped with the notations previously introduced, the following vague convergence (in the space of continuous functions from $\R_{+}^{d}$ to $\R$) holds : as $n,\; k\rightarrow +\infty$ such that $k=o(n)$, we have
\begin{equation}
\label{tailprocess}
\sqrt{k}\left(\frac{1}{k}\sum_{i=1}^{n}\mathds{1}\left(\frac{\mathbf{X}_i}{\mathbf{a}(n/k)}>\x\right)-\frac{n}{k}\overline{F}\left(\mathbf{a}(n/k)\x\right)\right) \xrightarrow[n\rightarrow\infty]{v} W(\x^{-\a}),
\end{equation}

where $\mathbf{x}=\big(x_1,\dots,x_l\big)$, $W(\x)$ is a centred Gaussian random field with covariance given by: $$\forall (\x,\mathbf{y})\in \R^{l}\times \R^{l},\;\; \E\left[W(\x)W(\mathbf{y})\right] = \nu\big(\max(\x^{-1/\a},\mathbf{y^{-1/\a}})\big).$$
\end{thm}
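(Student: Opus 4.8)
\emph{Proof strategy.} The plan is to read the left-hand side of \eqref{tailprocess} as a $\sqrt{k}$-normalised empirical process indexed by upper orthants, to establish its weak convergence by combining a finite-dimensional central limit theorem with an asymptotic equicontinuity estimate taken from the tail empirical process literature, and then to recover the announced Gaussian field through a deterministic change of variable.

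Concretely, I would first introduce, for $j=1,\dots,n$, the i.i.d.\ Bernoulli variables $Z_j(\x) = \mathds{1}\big(\mathbf{X}_j/\mathbf{a}(n/k) > \x\big)$, whose common mean is $\overline{F}\big(\mathbf{a}(n/k)\x\big)$, so that the quantity inside $\sqrt{k}(\cdot)$ in \eqref{tailprocess} is exactly $k^{-1}\sum_{j=1}^{n}\big(Z_j(\x)-\E[Z_j(\x)]\big)$, a centred empirical sum normalised by $k$. By \eqref{exponent}, its expectation satisfies $\frac{n}{k}\overline{F}\big(\mathbf{a}(n/k)\x\big)\to\nu(\x)$ for every $\x\in\R_+^l$; in particular, because $a^{(i)}(n/k)=F_i^{-1}(1-k/n)$, the $i$-th marginal of this limit is $x_i^{-\a}$, which is what makes the reparametrisation below natural.

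Next I would prove finite-dimensional convergence: for fixed $\x^{(1)},\dots,\x^{(m)}\in(0,\infty]^l$, apply the Lindeberg--Feller multivariate CLT to the triangular array of centred vectors $\big(\sqrt{k}\,k^{-1}(Z_j(\x^{(r)})-\E[Z_j(\x^{(r)})])\big)_{1\le r\le m}$, $1\le j\le n$. Using the identity $Z_j(\x)Z_j(\mathbf{y})=Z_j(\x\vee\mathbf{y})$ (componentwise maximum), the covariance of two coordinates is
\begin{equation*}
k\,\C\!\left[\tfrac1k\textstyle\sum_{j=1}^n Z_j(\x),\;\tfrac1k\textstyle\sum_{j=1}^n Z_j(\mathbf{y})\right]=\tfrac{n}{k}\overline{F}\big(\mathbf{a}(n/k)(\x\vee\mathbf{y})\big)-\tfrac{k}{n}\,\tfrac{n}{k}\overline{F}\big(\mathbf{a}(n/k)\x\big)\,\tfrac{n}{k}\overline{F}\big(\mathbf{a}(n/k)\mathbf{y}\big),
\end{equation*}
which by \eqref{exponent} and $k/n\to0$ tends to $\nu(\x\vee\mathbf{y})$; the Lindeberg condition is immediate since each summand is bounded by $k^{-1/2}\to0$. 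This identifies the limit of the finite-dimensional laws as those of a centred Gaussian field $G$ with $\E[G(\x)G(\mathbf{y})]=\nu(\x\vee\mathbf{y})$.

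Finally I would upgrade finite-dimensional to functional convergence — the step I expect to be the main obstacle. The class of upper-orthant indicators $\{\mathds{1}((\x,+\infty]):\x\in\R_+^l\}$ is a VC class and the prelimit intrinsic variance $\x\mapsto\frac{n}{k}\overline{F}(\mathbf{a}(n/k)\x)$ converges to the continuous limit $\nu$, so the chaining / uniform-entropy argument for $\sqrt{k}$-normalised tail empirical processes — precisely the content of \citet[Proposition 3]{deHaan93}, see also \citet[Lemma 3.1]{Einmahl97} — provides the required asymptotic equicontinuity on compacts of $(0,\infty]^l$; rather than reprove it, I would quote it. Together with the finite-dimensional step this yields weak convergence of the process in \eqref{tailprocess} to $G$. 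To conclude, note that $\Phi:\x\mapsto\x^{-\a}$ is a homeomorphism of $(0,\infty]^l$, so $W:=G\circ\Phi^{-1}$ is a centred Gaussian field with $G(\x)=W(\x^{-\a})$, and substituting $\mathbf{u}=\x^{-\a}$, $\mathbf{v}=\mathbf{y}^{-\a}$ gives $\E[W(\mathbf{u})W(\mathbf{v})]=\nu\big(\max(\mathbf{u}^{-1/\a},\mathbf{v}^{-1/\a})\big)$, the stated covariance, which establishes \eqref{tailprocess}.
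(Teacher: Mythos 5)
Your proposal is correct and follows essentially the same route as the paper: the paper's proof likewise establishes finite-dimensional convergence via the multivariate Lindeberg--Feller theorem with exactly your covariance computation (the product of indicators reducing to the indicator of the componentwise maximum, the cross term vanishing since $k/n\to 0$), and then identifies the limit field through the reparametrisation $\x\mapsto\x^{-\a}$. The only divergence is in the tightness step, where the paper gives a hands-on argument (partitioning $[0,M]^l$ and applying a Bernstein-type exponential inequality of Ruymgaart to the increments) whereas you delegate asymptotic equicontinuity to \citet[Proposition 3]{deHaan93} and \citet[Lemma 3.1]{Einmahl97} --- which is consistent with the paper's own presentation of the theorem as a recalled result.
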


In order to generalize the result stated above to the situation where the sample fraction $k/n$ of observations involved in the local Hill estimator possibly varies with the location, we exploit a property of the inverse of a regularly varying function: if $1-F$ is regularly varying with index $-\a$, then $F^{-1}(1-1/.)$ is regularly varying with index $1/\a$, (see \citet{resnick2007heavy}).

\begin{corrol}
\label{corGaussField}{\sc (A Functional Central Limit Theorem (II))}
Let $\mathbf{k}=(k_1,\dots k_l)\in\N^{\star l}$ with $k_i = k_i(n) \rightarrow \infty$ and $k_i/n\rightarrow 0$ as $n\rightarrow +\infty$. Suppose that, for all $i\in\{1,\;\ldots,\; l\}$, $c_i=\lim_{n\rightarrow\infty} k_1/k_i$ is well-defined and belongs to $]0,\; +\infty[$.
Set $\mathbf{a}(n/\mathbf{k})=(a^{(1)}(n/k_1),\dots,a^{(l)}(n/k_l))$ and $\mathbf{x}'=\left(x_1,c_2^{1/\a}x_2,\dots,c_l^{1/\a}x_l\right)$. We have
\begin{equation}
\label{tailprocesskdiff}
\sqrt{k_1}\left(\frac{1}{k_1}\sum_{i=1}^{n}\mathds{1}\left(\frac{\mathbf{X}_i}{\mathbf{a}(n/\mathbf{k})}>\x\right)-\frac{n}{k_1}\overline{F}\left(\mathbf{a}\left(\frac{n}{\mathbf{k}}\right)\x\right)\right) \xrightarrow[n\rightarrow\infty]{v} W(\x'^{-a}),
\end{equation}

where $W(\x)$ is a centred Gaussian field with same covariance operator as that involved in Theorem \ref{GaussField}.

\end{corrol}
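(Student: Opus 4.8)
The plan is to deduce the Corollary from Theorem \ref{GaussField} by a change of variables that absorbs the differing sample fractions into the spatial argument of the tail empirical process. First I would introduce the common index $k = k_1$ and rewrite, for each coordinate $i$, the normalization $a^{(i)}(n/k_i)$ in terms of $a^{(i)}(n/k)$. Since $1-F_i \in \mathcal{RV}_{-\a}$, the quoted inversion property gives $a^{(i)} = F_i^{-1}(1-1/\cdot) \in \mathcal{RV}_{1/\a}$, so that
\begin{equation}
\label{regvarratio}
\frac{a^{(i)}(n/k_i)}{a^{(i)}(n/k)} = \frac{a^{(i)}\big((n/k)\cdot(k/k_i)\big)}{a^{(i)}(n/k)} \xrightarrow[n\rightarrow\infty]{} \left(\lim_{n\to\infty}\frac{k_1}{k_i}\right)^{1/\a} = c_i^{1/\a},
\end{equation}
using $k/k_i = k_1/k_i \to c_i \in (0,\infty)$ and the uniform convergence of regularly varying functions on compacts (Potter-type bounds). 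In particular $a^{(i)}(n/k_i) = c_i^{1/\a}(1+o(1))\,a^{(i)}(n/k)$.

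Next I would rewrite the indicator event. We have $X_{j,i}/a^{(j)}(n/k_j) > x_j$ if and only if $X_{j,i}/a^{(j)}(n/k) > x_j\, a^{(j)}(n/k_j)/a^{(j)}(n/k)$, and by \eqref{regvarratio} the right-hand threshold converges to $c_j^{1/\a} x_j$, i.e. to the $j$-th coordinate of $\x'$. Hence, up to a vanishing perturbation of the argument, the left-hand side of \eqref{tailprocesskdiff} equals
\begin{equation*}
\sqrt{k}\left(\frac{1}{k}\sum_{i=1}^{n}\mathds{1}\left(\frac{\mathbf{X}_i}{\mathbf{a}(n/k)}>\x'_n\right)-\frac{n}{k}\overline{F}\big(\mathbf{a}(n/k)\,\x'_n\big)\right),
\end{equation*}
where $\x'_n \to \x'$ componentwise. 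Theorem \ref{GaussField} gives the vague convergence of the corresponding process evaluated at the fixed point $\x'$ to $W(\x'^{-\a})$; to upgrade from $\x'$ to $\x'_n \to \x'$ I would invoke the asymptotic equicontinuity (tightness) of the tail empirical process that underlies Theorem \ref{GaussField} — this is exactly the functional (as opposed to finite-dimensional) nature of that statement — together with a converging-together / Slutsky argument in the space of continuous functions on $\R_+^l$.

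Finally I would check that the deterministic centring terms match: $\frac{n}{k}\overline{F}(\mathbf{a}(n/k)\x'_n) - \frac{n}{k}\overline{F}(\mathbf{a}(n/\mathbf{k})\x) \to 0$, which again follows from \eqref{regvarratio} and the regular variation of $\overline{F}$ restricted to the relevant rays, so that replacing one centring by the other costs only a $o(\sqrt{k}\cdot(k/n)) = o(1)$ term after multiplication by $\sqrt{k}$ — here one uses $\frac{n}{k}\overline{F}(\mathbf{a}(n/k)\cdot) \to \nu(\cdot)$ from \eqref{exponent} and smoothness of $\nu$ on $(0,\infty]^l$. The main obstacle is the passage from pointwise evaluation at $\x'$ to evaluation at the moving point $\x'_n$: it is not enough to know the one-dimensional marginal limit, one genuinely needs the process-level tightness from \citet{deHaan93} to control $W$ near $\x'$ and to ensure the $o(1)$ fluctuation of $\sqrt{k}\big(\cdot\big)$ between $\x'_n$ and $\x'$. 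Everything else is a routine regular-variation computation.
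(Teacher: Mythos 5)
Your proposal is correct and follows essentially the same route as the paper: regular variation of $a^{(i)}=F_i^{-1}(1-1/\cdot)$ gives $a^{(i)}(n/k_i)/a^{(i)}(n/k_1)\to c_i^{1/\a}$, and the result is obtained by composing the functional convergence of Theorem \ref{GaussField} with this deterministic limit (the paper's ``plug this limit in equation \eqref{tailprocess}'' step, justified exactly by the process-level tightness you invoke). The only cosmetic difference is your final check on the centring terms, which is actually an identity once $\x'_n$ is defined through the exact ratios $a^{(j)}(n/k_j)/a^{(j)}(n/k_1)$, so no extra argument is needed there.
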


Refer to the Appendix section for the technical proof. Since the local Hill estimators are functionals of the tail empirical process, a Central Limit Theorem for the random vector formed by concatenating them can be immediately derived from Theorem \ref{GaussField}.

\begin{thm}
\label{HillGaussRand}
For any $l\geq 1$ we have, as $n,\; k\rightarrow +\infty$ such that $k=o(n)$:

\begin{equation}
\label{HillRandTh}
\sqrt{k}\left(H_{k,n}^{(1)}-\int_{X_1(k)}^{\infty}\frac{n}{k}\overline{F}_1\left(x\right)\frac{dx}{x},
\dots, H_{k,n}^{(l)}-\int_{X_l(k)}^{\infty}\frac{n}{k}\overline{F}_l\left(x\right)\frac{dx}{x}\right)\Rightarrow \mathcal{N}\left(0,\Sigma\right),
\end{equation}

where $\Sigma_{i,j}=\int_{1}^{\infty}\int_{1}^{\infty}\nu_{i,j}(x,y)x^{-1}y^{-1}dxdy$ and $\Sigma_{i,i} = 2/\a^2$.
\end{thm}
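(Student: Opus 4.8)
The plan is to realise each local Hill estimator as an integral functional of the tail empirical process appearing in \eqref{tailprocess} and to push the weak convergence of Theorem~\ref{GaussField} forward through that functional. Writing $a_i:=a^{(i)}(n/k)$, I would introduce
\[
T^{(i)}_n(u):=\frac{1}{k}\sum_{m=1}^{n}\mathds{1}\!\left(\frac{X_{i,m}}{a_i}>u\right),\qquad \mathbb{W}^{(i)}_n(u):=\sqrt{k}\left(T^{(i)}_n(u)-\frac{n}{k}\overline{F}_i(a_iu)\right),
\]
and note that $\mathbb{W}^{(i)}_n$ is precisely the restriction of the process in \eqref{tailprocess} to the $i$-th coordinate axis, a continuous operation on the path space; hence Theorem~\ref{GaussField} already delivers the \emph{joint} weak convergence $\big(\mathbb{W}^{(1)}_n,\dots,\mathbb{W}^{(l)}_n\big)\Rightarrow\big(\mathbb{W}_1,\dots,\mathbb{W}_l\big)$, where $\mathbb{W}_i(u):=W\big((u\,\mathbf{e}_i)^{-\a}\big)$ and $\mathbf{e}_i$ denotes the $i$-th canonical basis vector. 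The substitution $u=X_i(k+1)x/a_i$ in the integral form of $H^{(i)}_{k,n}$ recalled in Section~\ref{background}, together with $u=x/a_i$ in the centring integral, then rewrites the $i$-th coordinate of the left-hand side of \eqref{HillRandTh} as
\[
\sqrt{k}\left(H^{(i)}_{k,n}-\int_{X_i(k)}^{\infty}\frac{n}{k}\overline{F}_i(x)\,\frac{dx}{x}\right)=\int_{Q^{(i)}_n}^{\infty}\mathbb{W}^{(i)}_n(u)\,\frac{du}{u}+o_P(1),\qquad Q^{(i)}_n:=\frac{X_i(k+1)}{a_i},
\]
the $o_P(1)$ absorbing the harmless off-by-one discrepancy between $X_i(k)$ and $X_i(k+1)$ (of order $1/k$ once rescaled).

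The next step is to dispose of the random lower endpoint and of the upper tail. Weak consistency of the intermediate order statistic gives $Q^{(i)}_n\to 1$ in probability (again a consequence of \eqref{tailprocess}), and, combined with tightness of $\mathbb{W}^{(i)}_n$ near $u=1$, this forces $\int_{Q^{(i)}_n}^{1}\mathbb{W}^{(i)}_n(u)\,\frac{du}{u}=o_P(1)$, so the lower limit may be replaced by $1$. For the upper tail, the elementary bound $k\,\C\big[T^{(i)}_n(u),T^{(i)}_n(v)\big]\le\frac{n}{k}\overline{F}_i\big(a_i\max(u,v)\big)$ together with a Potter bound yields, uniformly over large $n$,
\[
\V\!\left[\int_T^{\infty}\mathbb{W}^{(i)}_n(u)\,\frac{du}{u}\right]\le C\int_T^{\infty}\!\int_T^{\infty}\max(u,v)^{-\a+\e}\,\frac{du}{u}\,\frac{dv}{v}\ \longrightarrow\ 0\quad\text{as }T\to\infty,
\]
so the tail of the integral is asymptotically negligible, uniformly in $n$. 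Since $g\mapsto\big(\int_1^{T}g(u\,\mathbf{e}_i)\,\frac{du}{u}\big)_{1\le i\le l}$ is continuous for each fixed $T$, the continuous mapping theorem applied to the convergence \eqref{tailprocess}, followed by a routine ``converging together'' argument as $T\to\infty$, yields that the left-hand side of \eqref{HillRandTh} converges weakly to $\big(\int_1^{\infty}\mathbb{W}_i(u)\,\frac{du}{u}\big)_{1\le i\le l}$, a centred Gaussian vector, each coordinate being an a.s.\ absolutely convergent linear functional of the Gaussian field $W$.

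It remains to identify the covariance. By Fubini (the kernel below is nonnegative),
\[
\Sigma_{i,j}=\C\!\left[\int_1^{\infty}\mathbb{W}_i(u)\,\frac{du}{u},\ \int_1^{\infty}\mathbb{W}_j(v)\,\frac{dv}{v}\right]=\int_1^{\infty}\!\int_1^{\infty}\E\!\left[\mathbb{W}_i(u)\mathbb{W}_j(v)\right]\frac{du}{u}\,\frac{dv}{v},
\]
and the covariance operator of $W$ in Theorem~\ref{GaussField} gives $\E[\mathbb{W}_i(u)\mathbb{W}_j(v)]=\nu\big(\max(u\,\mathbf{e}_i,\,v\,\mathbf{e}_j)\big)$. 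For $i\neq j$ the argument of $\nu$ is the vector with $u$ in position $i$, $v$ in position $j$ and $0$ elsewhere, so this equals $\nu_{i,j}(u,v)$ by the very definition of $\nu_{i,j}$, which is the announced off-diagonal formula; for $i=j$ it equals $\nu\big(\max(u,v)\,\mathbf{e}_i\big)=\max(u,v)^{-\a}$ since $\overline{F}_i\in\mathcal{RV}_{-\a}$. Consequently
\[
\Sigma_{i,i}=2\int_1^{\infty}\left(\int_1^{v}\frac{du}{u}\right)v^{-\a}\,\frac{dv}{v}=2\int_1^{\infty}v^{-\a-1}\log v\,dv=\frac{2}{\a^{2}}.
\]

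The main obstacle, as I see it, is not the covariance arithmetic but the transfer of weak convergence through the map $g\mapsto\int_{\,\cdot}^{\infty}g(u)\,\frac{du}{u}$: this functional fails to be continuous for the topology of local uniform convergence on $\R_+^l$ (the integral extends to $+\infty$) and, worse, it is evaluated at the \emph{random} lower endpoint $Q^{(i)}_n$. Both defects are neutralised by the two estimates above --- tightness of the limiting field near $u=1$ for the endpoint, and the uniform second-moment bound near $u=+\infty$ for the tail --- which together reduce everything to the genuinely continuous functional $g\mapsto\int_1^{T}g$ on a compact interval; that reduction is where the technical work, deferred to the Appendix, is concentrated.
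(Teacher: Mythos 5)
Your proposal is correct and follows essentially the same route as the paper: both realise the local Hill estimators as integrals of the joint tail empirical process restricted to the coordinate axes, pass to the limit via Theorem \ref{GaussField} together with a continuous-mapping/converging-together argument for the integral functional over $(1,\infty]$, and read off $\Sigma_{i,j}=\int_1^\infty\int_1^\infty\nu_{i,j}(u,v)\,\frac{du}{u}\frac{dv}{v}$ and $\Sigma_{i,i}=2/\a^2$ from the covariance operator of $W$. The only cosmetic difference is in handling the random threshold: the paper uses the joint convergence with $X_i(k)/a^{(i)}(n/k)\xrightarrow{\P}1$ and the composition map $(x(t),p)\mapsto x(tp)$, whereas you keep the deterministic normalisation and move the randomness into the lower endpoint of integration, killing it by tightness near $u=1$ --- two standard, interchangeable devices.
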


The following corollary relaxes the assumption that all local Hill estimators involve the same number of observations.

\begin{corrol}
\label{corHillGaussRand}
Equipped with the assumptions and notations of Corollary \ref{corGaussField}, for any $l\geq 1$ we have

\begin{equation}
\label{HillRandThkdiff}
\sqrt{k_1}\left(H_{k_1,n}^{(1)}-\int_{X_1(k_1)}^{\infty}\frac{n}{k_1}\overline{F}_1\left(x\right)\frac{dx}{x}, \dots, H_{k_l,n}^{(l)}-\int_{X_l(k_l)}^{\infty}\frac{n}{k_l}\overline{F}_l\left(x\right)\frac{dx}{x}\right) \Rightarrow \mathcal{N}\left(0,\Sigma'\right),
\end{equation}

with $\Sigma'_{i,j}=\int_{c_{j}^{1/\a}}^{\infty}\int_{c_{i}^{1/\a}}^{\infty}\nu_{i,j}(x,y)x^{-1}y^{-1}dxdy$, for $1\leq i \ne j\leq l$ and $\Sigma'_{i,i}=2c_i/\a^2.$
\end{corrol}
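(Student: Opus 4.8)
The plan is to deduce Corollary~\ref{corHillGaussRand} from Corollary~\ref{corGaussField} in exactly the way Theorem~\ref{HillGaussRand} is deduced from Theorem~\ref{GaussField}: each local Hill statistic is a functional (continuous after a truncation) of the tail empirical process, so the only genuinely new work is the bookkeeping of the constants $c_i=\lim_n k_1/k_i$ that enter once one normalises by $\sqrt{k_1}$ instead of $\sqrt{k_i}$ and replaces the process of \eqref{tailprocess} by that of \eqref{tailprocesskdiff}.

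First I would rewrite each $H^{(i)}_{k_i,n}$ as an integral of a tail empirical process with a \emph{deterministic} threshold. Since $\mathds{1}\bigl(X_{i,j}/X_i(k_i+1)>x\bigr)=\mathds{1}\bigl(X_{i,j}/a^{(i)}(n/k_i)>x\,R_n^{(i)}\bigr)$ with $R_n^{(i)}:=X_i(k_i+1)/a^{(i)}(n/k_i)$, the substitution $u=x\,R_n^{(i)}$, together with the matching substitution in the centring integral, gives
\[
\sqrt{k_1}\Bigl(H^{(i)}_{k_i,n}-\int_{X_i(k_i)}^{\infty}\frac{n}{k_i}\overline{F}_i(x)\,\frac{dx}{x}\Bigr)=\frac{k_1}{k_i}\int_{R_n^{(i)}}^{\infty} Z_n^{(i)}(u)\,\frac{du}{u}+o_\P(1),
\]
where $Z_n^{(i)}$ is the $i$-th one-dimensional marginal of the process on the left of \eqref{tailprocesskdiff} (obtained by letting the other coordinates of $\x$ tend to $0$) and $o_\P(1)$ absorbs the difference between the thresholds $X_i(k_i)$ and $X_i(k_i+1)$, which after multiplication by $\sqrt{k_1}$ is $O_\P(\sqrt{k_1}/k_i)=o_\P(1)$. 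Here $k_1/k_i\to c_i$ by hypothesis, and $R_n^{(i)}\xrightarrow{\P}1$ by a standard argument (use \eqref{tailprocesskdiff} at a single point, the monotonicity of $u\mapsto\frac{1}{k_i}\sum_j\mathds{1}(X_{i,j}/a^{(i)}(n/k_i)>u)$, and the fact that this quantity equals $1$ at $u=R_n^{(i)}$).

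I would then invoke Corollary~\ref{corGaussField}, which makes $(Z_n^{(1)},\dots,Z_n^{(l)})$ converge weakly to the vector of one-dimensional marginals $(W^{(1)},\dots,W^{(l)})$ of $W(\x'^{-\a})$; by the covariance operator of $W$ and the homogeneity $\nu_i(t)=t^{-\a}$ of the $i$-th marginal of $\nu$,
\[
\E\bigl[W^{(i)}(u)W^{(j)}(v)\bigr]=\nu_{i,j}\bigl(c_i^{1/\a}u,\,c_j^{1/\a}v\bigr)\quad(i\neq j),\qquad \E\bigl[W^{(i)}(u)W^{(i)}(v)\bigr]=c_i^{-1}\min(u^{-\a},v^{-\a}).
\]
Since each $R_n^{(i)}$ has a constant limit, Slutsky promotes this to joint convergence of $\bigl((Z_n^{(i)},R_n^{(i)})\bigr)_i$ to $\bigl((W^{(i)},1)\bigr)_i$, and applying the map $\bigl((\phi_i,r_i)\bigr)_i\mapsto\bigl(c_i\int_{r_i}^{\infty}\phi_i(u)\,\frac{du}{u}\bigr)_i$ — legitimate modulo the tail control below — yields \eqref{HillRandThkdiff} with $\Sigma'_{i,j}=\C\bigl[c_i\int_1^\infty W^{(i)}(u)\tfrac{du}{u},\,c_j\int_1^\infty W^{(j)}(v)\tfrac{dv}{v}\bigr]$. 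For $i=j$ this is $c_i\int_1^\infty\!\int_1^\infty\min(u^{-\a},v^{-\a})\frac{du\,dv}{uv}=2c_i/\a^2$; for $i\neq j$ the change of variables $x=c_i^{1/\a}u$, $y=c_j^{1/\a}v$ both transfers $\nu_{i,j}$ and moves the lower endpoints from $1$ to $c_i^{1/\a}$ and $c_j^{1/\a}$, producing $\Sigma'_{i,j}$ (the overall factor $c_ic_j$ coming from the $\sqrt{k_1}$-normalisation, and reducing to the $\Sigma$ of Theorem~\ref{HillGaussRand} when all $c_i=1$).

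The delicate point — shared with the proof of Theorem~\ref{HillGaussRand} — is that $\phi\mapsto\int_1^\infty\phi(u)\,\frac{du}{u}$ is not continuous for uniform convergence on compact sets, so the continuous-mapping step must be run on the truncated functionals $\int_{R_n^{(i)}}^{M}$ and then completed by a tail estimate that is uniform in $n$ as $M\to\infty$. Writing $\int_M^\infty\frac{1}{k_1}\sum_j\mathds{1}(X_{i,j}/a^{(i)}(n/k_i)>u)\frac{du}{u}=\frac{1}{k_1}\sum_j\log^+\!\bigl(X_{i,j}/(Ma^{(i)}(n/k_i))\bigr)$ and using the regular variation of $\overline{F}_i$, the $[M,\infty)$-part of $\int Z_n^{(i)}$ is centred with variance $O(M^{-\a})$ uniformly in $n$, hence $o_\P(1)$ uniformly in $n$ as $M\to\infty$; the corresponding tail of the limit, $\int_M^\infty W^{(i)}(u)\,\frac{du}{u}$, vanishes almost surely as $M\to\infty$ because $\V[W^{(i)}(u)]=c_i^{-1}u^{-\a}$ is integrable against $\frac{du}{u}$ at $+\infty$. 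The remaining ingredients ($R_n^{(i)}\to1$, the negligible order-statistic shift, and the joint Gaussian limit from Corollary~\ref{corGaussField}) are routine.
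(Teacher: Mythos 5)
Your proposal is correct and follows essentially the same route as the paper: the paper's proof of Corollary \ref{corHillGaussRand} is literally ``the argument is the same as for Corollary \ref{corGaussField}'', i.e.\ rerun the proof of Theorem \ref{HillGaussRand} on the process of \eqref{tailprocesskdiff}, using $a^{(i)}(n/k_i)/a^{(i)}(n/k_1)\to c_i^{1/\a}$ and the $\sqrt{k_1}$-normalisation to track the constants, which is exactly what you do (your truncation and tail-control step for the map $\phi\mapsto\int_1^\infty\phi(u)\,u^{-1}du$ is the part the paper delegates to Resnick). One substantive remark: your computation yields $\Sigma'_{i,j}=c_ic_j\int_{c_j^{1/\a}}^{\infty}\int_{c_i^{1/\a}}^{\infty}\nu_{i,j}(x,y)\,x^{-1}y^{-1}\,dx\,dy$ for $i\ne j$, with a prefactor $c_ic_j$ that the displayed statement omits; your version is the one consistent with the stated diagonal entries, since setting $i=j$ (so $\nu_{i,i}(x,y)=\max(x,y)^{-\a}$) in the unprefactored formula gives $2/(c_i\a^2)$ rather than the stated $2c_i/\a^2$, whereas your prefactored formula gives $2c_i/\a^2$. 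The discrepancy therefore appears to be a typo in the statement of the corollary, not an error in your argument.
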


We now address the issue of removing the random centring. From a practical perspective indeed, in order to recover a pivotal statistic and build (asymptotic) confidence intervals the random centring should be replaced by $1/\a$. The key point is that $\int_{X_i(k_i)}^{\infty}\frac{n}{k_i}\overline{F}_i\left(x\right)\frac{dx}{x}$ can be substituted for  $\int_{a^{(i)}}^{\infty}\frac{n}{k_i}\overline{F}_i\left(x\right)\frac{dx}{x}$, along with (the second order) Condition \eqref{uni2orderint}. This condition is used when trying to establish a Central Limit Theorem in the univariate setup (see \citet{resnick2007heavy}):
\begin{equation}
\label{uni2orderint}
\forall\,i\in \{1,\ldots,l\},\: \lim_{n\rightarrow\infty}\sqrt{k}\int_{1}^{\infty}\frac{n}{k}\overline{F}_{i}\left(a^{(i)}\left(\frac{n}{k}\right)x\right)-x^{-\alpha}\frac{dx}{x}=0.
\end{equation}
This immediately implies that the random vector
\begin{multline}
\label{hillranddet2}
\sqrt{k}\left(H_{k,n}^{(1)}-\frac{1}{\alpha},\ldots, H_{k,n}^{(l)}-\frac{1}{\alpha}\right)+ \\
\sqrt{k}\left(\int_{a^{(1)}(n/k)}^{X_{1}(k)}\frac{n}{k}\overline{F}_1\left(x\right)\frac{dx}{x},\ldots,\int_{a^{(l)}(n/k)}^{X_{l}(k)}\frac{n}{k}\overline{F}_{l}\left(x\right)\frac{dx}{x}\right)
\end{multline}
converges in distribution to $\mathcal{N}(0,\Sigma')$.
As shown by expression \eqref{hillranddet2}, recentering by $(1/\alpha,\; \ldots,\; 1/\alpha)$ requires to incorporate a term due to the possible correlation between the random centring and the local Hill estimators into the asymptotic covariance matrix. Indeed, from Eq. \eqref{hillranddet2}, we straightforwardly get that
\begin{equation}
\label{hilldet}
\sqrt{k}\left(H_{k,n}^{(1)}-\frac{1}{\alpha},\dots,H_{k,n}^{(l)}-\frac{1}{\alpha}\right)\Rightarrow \mathcal{N}\left(0,\Omega\right),
\end{equation}
as $n$ and $k=o(n)$ both tend to infinity, provided that, for $1\leq i\neq j \leq l$, the expectation of the quantity

\begin{multline}
\label{horrorExpectation}
k \int_{a^{(i)}(n/k)}^{X_i(k)}\frac{n}{k}\overline{F}_i\left(x\right)\frac{dx}{x}\int_{a^{(j)}(n/k)}^{X_j(k)}\frac{n}{k}\overline{F}_j\left(x\right)\frac{dx}{x}\\
+{} k\left(H_{k,n}^{(j)}-\frac{1}{\alpha}\right)\int_{a^{(i)}(n/k)}^{X_i(k)}\frac{n}{k}\overline{F}_i\left(x\right)\frac{dx}{x}+k\left(H_{k,n}^{(i)}-\frac{1}{\alpha}\right)\int_{a^{(j)}(n/k)}^{X_j(k)}\frac{n}{k}\overline{F}_j\left(x\right)\frac{dx}{x}
\end{multline}
converges, the limit being then equal to $\int_{1}^{\infty}\int_{1}^{\infty}\nu_{i,j}(x,y)/(xy)\; dxdy - \Omega_{i,j}$, while $\Omega_{i,i} =  1/\alpha^2$ for all $i \in\{1,\; \ldots,\; l\}$. A tractable expression for the expectation of the quantity in Eq.\eqref{horrorExpectation} (and then for $\Omega$) can be derived from the Bahadur-Kiefer representation of high order quantiles, (see \citet{Csorgo1978}), under the additional hypothesis \eqref{multi2order} which can be viewed as a multivariate counterpart of Condition \eqref{uni2orderint}:

\begin{multline}
\label{multi2order}
\text{For } 1\leq i\ne j\leq l,\:\sup_{x,y>1}\left\vert\frac{n}{k}\overline{F}_{i,j}\left( a^{(i)}\left(\frac{n}{k}\right) x,a^{(j)}\left(\frac{n}{k}\right) y\right)-\nu_{i,j}(x,y) \right\vert\\
=o\left(\frac{1}{\log k}\right)\; \text{as }n,\; k \rightarrow+\infty.
\end{multline}

This condition permits to establish the next theorem, which provides the form of the asymptotic covariance of the r.v. obtained by concatenating the local Hill estimators, when all are recentered by $1/\a$. Corollary \ref{corHillGauss} offers a generalization to the situation where the number of extremal observations involved in the local tail index estimation possibly depends on the location.

\begin{thm}
\label{HillGaussDet}

Suppose that Condition \eqref{uni2orderint} and Condition \eqref{multi2order} hold true, together with the von Mises conditions:
\begin{equation}
\label{vonmise}
\lim_{s\rightarrow\infty}\a(s):=\frac{sF_i'(s)}{1-F_i(s)}= \a,\qquad  \forall i\in\{1,\; \ldots,\; l\}.
\end{equation}
Then we have the convergence in distribution
\begin{equation}
\label{HillDetTh}
\sqrt{k}\left(H^{(1)}_{k,n}-\frac{1}{\alpha},\dots,H_{k,n}^{(l)}-\frac{1}{\alpha}\right)\Rightarrow \mathcal{N}\left(0,\Omega\right),
\end{equation}
\[\text{where }\quad\Omega_{i,j}=\left\{
   \begin{array}{cl}
		\frac{\nu_{i,j}(1,1)}{\a^2} & \mbox{if } 1\leq i\ne j\leq l.\\
		& \\
		\frac{1}{\alpha^2} & \mbox{otherwise}.\\
	\end{array}
   \right.
\]

\end{thm}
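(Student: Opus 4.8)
The argument builds on the convergence \eqref{hillranddet2}, itself a consequence of Theorem~\ref{HillGaussRand} combined with Condition~\eqref{uni2orderint}. Writing
\[
A_i := \sqrt{k}\left(H_{k,n}^{(i)}-\frac{1}{\alpha}\right),\qquad B_i := \sqrt{k}\int_{a^{(i)}(n/k)}^{X_i(k)}\frac{n}{k}\,\overline{F}_i(x)\,\frac{dx}{x},
\]
we have $\big(A_1+B_1,\dots,A_l+B_l\big)\Rightarrow\mathcal{N}(0,\Sigma)$ with $\Sigma_{i,j}=\int_1^\infty\int_1^\infty\nu_{i,j}(x,y)(xy)^{-1}dx\,dy$ for $i\neq j$ and $\Sigma_{i,i}=2/\alpha^2$. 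The plan is, first, to linearise the random recentering $B_i$ by a Bahadur--Kiefer expansion --- which yields both the joint asymptotic normality of $(A_i)_{i\le l}$ and a tractable form for the correction attached to replacing the random recentering by $1/\alpha$ --- and, second, to identify $\Omega$ by letting $n\to\infty$ in the expectation of the quantity in \eqref{horrorExpectation}.

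Set $\widehat{x}_i:=X_i(k)/a^{(i)}(n/k)$. The substitution $x=a^{(i)}(n/k)u$ gives $B_i=\sqrt{k}\int_1^{\widehat{x}_i}\frac{n}{k}\overline{F}_i\big(a^{(i)}(n/k)u\big)\frac{du}{u}$; since $\widehat{x}_i\to1$ in probability and, by the von Mises conditions \eqref{vonmise}, $\frac{n}{k}\overline{F}_i(a^{(i)}(n/k)u)\to u^{-\alpha}$ locally uniformly near $u=1$, we get $B_i=\sqrt{k}\log\widehat{x}_i+o_{\P}(1)$. As $X_i(k)$ is an intermediate order statistic and the von Mises conditions supply the tail smoothness of $F_i$ that is required, the Bahadur--Kiefer representation of high order quantiles (\citet{Csorgo1978}) applies, yielding
\[
\sqrt{k}\log\widehat{x}_i=\frac{1}{\alpha}\sqrt{k}\left(\frac{1}{k}\sum_{m=1}^{n}\mathds{1}\big(X_{i,m}>a^{(i)}(n/k)\big)-\frac{n}{k}\overline{F}_i\big(a^{(i)}(n/k)\big)\right)+o_{\P}(1).
\]
Thus $B_i$ equals, up to $o_{\P}(1)$, $1/\alpha$ times the value at the threshold of the $i$-th marginal tail empirical process; denote its weak limit by $W^{(i)}$, a centred Gaussian process with $\E[W^{(i)}(u)W^{(j)}(v)]=\nu_{i,j}(u,v)$ for $i\neq j$ and $\E[W^{(i)}(u)W^{(i)}(v)]=(\max(u,v))^{-\alpha}$. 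Since $A_i+B_i$ is likewise a continuous functional of the tail empirical process of \eqref{tailprocess} --- this is exactly how Theorem~\ref{HillGaussRand} is deduced from Theorem~\ref{GaussField} --- the pair $\big((A_i+B_i)_i,(B_i)_i\big)$ converges jointly, by Theorem~\ref{GaussField} and the continuous mapping theorem, to a centred Gaussian vector; hence so does $(A_i)_i=(A_i+B_i)_i-(B_i)_i$. This proves the convergence \eqref{HillDetTh}, and it only remains to identify $\Omega$.

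For $i=j$ one recovers the univariate Hill limit: from $A_i+B_i\Rightarrow\int_1^\infty W^{(i)}(u)\,du/u$ and $B_i\Rightarrow\alpha^{-1}W^{(i)}(1)$ one computes $\V[B_i]\to1/\alpha^2$ and $\C[A_i+B_i,B_i]\to1/\alpha^2$, whence $\Omega_{i,i}=\Sigma_{i,i}-2/\alpha^2+1/\alpha^2=1/\alpha^2$. For $i\neq j$, note that the quantity under the expectation in \eqref{horrorExpectation} equals $(A_i+B_i)(A_j+B_j)-A_iA_j$; Condition~\eqref{multi2order} is precisely what legitimises the limit, the uniform bound $o(1/\log k)$ on the tail bias over the whole quadrant $(1,\infty)^2$ absorbing the logarithmic factors produced by the integrals $\int_1^\infty\!\cdot\;dx/x$ (which run over ranges that grow with $k$) --- this both makes the Bahadur--Kiefer remainders negligible for the cross products and, together with \eqref{uni2orderint} and \eqref{vonmise}, yields the uniform integrability of $(A_i+B_i)(A_j+B_j)$ and of $A_iA_j$. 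Hence $\E[(A_i+B_i)(A_j+B_j)]\to\Sigma_{i,j}$, $\E[A_iA_j]\to\Omega_{i,j}$, and evaluating through the covariance of $W$,
\[
\Omega_{i,j}=\Sigma_{i,j}-\frac{1}{\alpha}\int_1^\infty\nu_{i,j}(u,1)\frac{du}{u}-\frac{1}{\alpha}\int_1^\infty\nu_{i,j}(1,v)\frac{dv}{v}+\frac{1}{\alpha^2}\nu_{i,j}(1,1).
\]
Finally, writing $\nu_{i,j}$ through its spectral decomposition and using its $(-\alpha)$-homogeneity gives the identity $\Sigma_{i,j}=\frac{1}{\alpha}\int_1^\infty\nu_{i,j}(u,1)\,du/u+\frac{1}{\alpha}\int_1^\infty\nu_{i,j}(1,v)\,dv/v$, so the first three terms cancel and $\Omega_{i,j}=\nu_{i,j}(1,1)/\alpha^2$, as announced.

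The main obstacle is the Bahadur--Kiefer step together with the $L^1$-control it feeds into: one must linearise the random recentering $B_i$ \emph{jointly} with $A_i+B_i$ and then show that the remainder, once inserted into the logarithmic integrals defining the Hill statistics, is negligible not only in probability but in mean for the products $A_iA_j$ --- and it is here that the stringent $o(1/\log k)$ rate in \eqref{multi2order} is genuinely needed. The remaining computations --- the explicit covariances of $W$ and the homogeneity identity for $\nu_{i,j}$ --- are routine.
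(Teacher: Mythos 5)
Your proposal is correct and follows the same overall strategy as the paper: start from the random-centred limit \eqref{hillranddet2}, linearise the recentering term $B_i$ via a Bahadur--Kiefer representation of the intermediate order statistic $X_i(k)$ (the paper's Lemmas \ref{linearisation} and \ref{orders}), compute the cross-covariances contributed by \eqref{horrorExpectation}, and cancel them against $\Sigma_{i,j}$ through the homogeneity identity $\Sigma_{i,j}=\frac{1}{\a}\int_1^\infty\nu_{i,j}(u,1)\frac{du}{u}+\frac{1}{\a}\int_1^\infty\nu_{i,j}(1,v)\frac{dv}{v}$, which is exactly the paper's Lemma \ref{simplification}. The one place where you genuinely diverge is the evaluation of the cross term $\E[A_iB_j]$: the paper expands $H^{(i)}_{k,n}$ as $\frac{1}{k}\sum_{m\le k}\log X_i(m)-\log X_i(k)$, applies Bahadur--Kiefer to \emph{every} order statistic, and evaluates the resulting Riemann sum $\sum_{m\le k}M_n(m)$ with Potter's bounds (Lemmas \ref{logorder}, \ref{limrap}, \ref{riemansum}) --- this is where the $o(1/\log k)$ rate in \eqref{multi2order} is consumed, since $\sum_{m\le k}1/m\sim\log k$; you instead read the covariance $\C[A_i+B_i,B_j]\to\frac{1}{\a}\int_1^\infty\nu_{i,j}(u,1)du/u$ off the limiting Gaussian field $W$. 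Your route is shorter and makes the logically load-bearing step explicit --- the joint weak convergence of $\big((A_i+B_i)_i,(B_i)_i\big)$ as functionals of the tail empirical process, which the paper leaves implicit when it asserts \eqref{hilldet} ``provided the expectation of \eqref{horrorExpectation} converges'' --- but it transfers the burden to an asserted uniform integrability of $(A_i+B_i)(A_j+B_j)$ and $A_iA_j$ that you do not justify; note that once joint weak convergence is in hand the covariance of the Gaussian limit is determined without any moment convergence, so you could drop that claim entirely, whereas if you insist on matching the paper's moment-convergence framing you would need the finite-$n$ Riemann-sum analysis (or an equivalent $L^2$ bound) that the paper supplies.
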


\begin{corrol}
\label{corHillGauss}
Suppose that the assumptions of Corollary \ref{corGaussField} are fulfilled and that, for any integer $l\geq 1$ and any $\mathbf{s}=(s_1,\dots,s_l)\in \S^{l}$, conditions  \eqref{uni2orderint}, \eqref{multi2order} and \eqref{vonmise} hold. Then, we have

\begin{equation}
\label{HillDetThkdiff}
\sqrt{k_1}\left(H_{k_1,n}^{(1)}-\frac{1}{\a},
\dots, H_{k_l,n}^{(l)}-\frac{1}{\a}\right)\Rightarrow \mathcal{N}\left(0,\Gamma\right),
\end{equation}

where, for $1\leq i\ne j\leq l$,
\begin{equation*}
\Gamma_{i,j}= \frac{\nu_{i,j}(c_i^{1/\a},c_j^{1/\a})}{\a^2}.
\end{equation*}

\end{corrol}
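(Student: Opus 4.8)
The plan is to follow exactly the same route that leads from Theorem \ref{HillGaussRand} to Theorem \ref{HillGaussDet}, but starting instead from Corollary \ref{corHillGaussRand} and Corollary \ref{corGaussField}, which already furnish the $\mathbf{k}$-dependent analogues with rescaled lower limits of integration $c_i^{1/\a}$. First I would invoke Corollary \ref{corHillGaussRand} to get that the vector $\sqrt{k_1}\big(H_{k_1,n}^{(1)}-\int_{X_1(k_1)}^\infty \frac{n}{k_1}\overline F_1(x)\frac{dx}{x},\dots,H_{k_l,n}^{(l)}-\int_{X_l(k_l)}^\infty \frac{n}{k_l}\overline F_l(x)\frac{dx}{x}\big)$ converges to $\mathcal N(0,\Sigma')$ with $\Sigma'_{i,j}=\int_{c_i^{1/\a}}^\infty\int_{c_j^{1/\a}}^\infty \nu_{i,j}(x,y)x^{-1}y^{-1}\,dx\,dy$ (note the asymmetry in the roles of $i$ and $j$ in the bounds, which mirrors the asymmetry $c_i=\lim k_1/k_i$). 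Then, using Condition \eqref{uni2orderint} applied at each location with $k=k_i$, I would replace each random centring $\int_{X_i(k_i)}^\infty$ by the deterministic $\int_{a^{(i)}(n/k_i)}^\infty \frac{n}{k_i}\overline F_i(x)\frac{dx}{x}$, which under \eqref{uni2orderint} equals $1/\a + o(1/\sqrt{k_i}) = 1/\a + o(1/\sqrt{k_1})$ up to the correction term $\sqrt{k_1}\int_{a^{(i)}(n/k_i)}^{X_i(k_i)}\frac{n}{k_i}\overline F_i(x)\frac{dx}{x}$, exactly as in \eqref{hillranddet2}.

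The core of the argument is then the covariance computation for the recentered-by-$1/\a$ vector, which requires controlling, for $i\ne j$, the expectation of the cross terms
\[
k_1\int_{a^{(i)}(n/k_i)}^{X_i(k_i)}\frac{n}{k_i}\overline F_i(x)\frac{dx}{x}\int_{a^{(j)}(n/k_j)}^{X_j(k_j)}\frac{n}{k_j}\overline F_j(x)\frac{dx}{x}
\]
plus the two mixed terms pairing a correction integral with $(H^{(\cdot)}_{k,n}-1/\a)$, precisely as in \eqref{horrorExpectation} but with location-dependent $k_i$ and $\sqrt{k_1}$ scaling. Here I would use the Bahadur–Kiefer representation of high-order quantiles (\citet{Csorgo1978}) together with the von Mises conditions \eqref{vonmise}: the von Mises condition guarantees that $\frac{n}{k_i}\overline F_i(x)$ near $x=a^{(i)}(n/k_i)$ behaves like the smooth Pareto tail, so that $\int_{a^{(i)}(n/k_i)}^{X_i(k_i)}\frac{n}{k_i}\overline F_i(x)\frac{dx}{x}$ is asymptotically $\frac{1}{\a}\frac{k_i}{n}\big(\text{something}\big)$-like and, after multiplication by $\sqrt{k_i}$, is asymptotically normal and jointly Gaussian with the tail empirical field; Condition \eqref{multi2order} (the $o(1/\log k)$ uniform rate, which I would apply with $k=\min_i k_i$ or, using $c_i\in(0,\infty)$, with any $k_i$) then pins down the joint limit so that the expectation converges. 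Tracking the $c_i^{1/\a}$ factors through this computation — they enter via the rescaled arguments $\mathbf{x}'$ in Corollary \ref{corGaussField} and via the lower limits in $\Sigma'$ — collapses the double integral $\int_{c_i^{1/\a}}^\infty\int_{c_j^{1/\a}}^\infty$ down to the point evaluation $\nu_{i,j}(c_i^{1/\a},c_j^{1/\a})/\a^2$ once the correction-term covariance is subtracted, by the same telescoping that turns $\Sigma_{i,j}$ into $\Omega_{i,j}=\nu_{i,j}(1,1)/\a^2$ in the proof of Theorem \ref{HillGaussDet}. The diagonal entry $\Gamma_{i,i}=2c_i/\a^2$ coming from $\Sigma'$ is reduced to $c_i/\a^2$ by the same univariate variance-reduction identity (the $2/\a^2 \to 1/\a^2$ passage), scaled by $c_i$; I would state this without redoing the univariate computation.

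The main obstacle I anticipate is the bookkeeping of the asymmetric constants $c_i$ and their powers $c_i^{1/\a}$ through the Bahadur–Kiefer step: one must check that the scaling by $\sqrt{k_1}$ rather than $\sqrt{k_i}$ is absorbed correctly (it produces the $\sqrt{k_1/k_i}\to\sqrt{c_i}$ factors that ultimately give the $2c_i/\a^2$ on the diagonal and interact with the $\nu_{i,j}$ homogeneity on the off-diagonal), and that the uniform second-order control \eqref{multi2order}, stated with a single $k$, transfers to the regime where the $k_i$ differ by bounded ratios — this is where the hypothesis $c_i\in(0,\infty)$ is essential and should be used explicitly. Everything else is a routine transcription of the proof of Theorem \ref{HillGaussDet}; accordingly I would present the argument as "identical to the proof of Theorem \ref{HillGaussDet} upon substituting Corollary \ref{corHillGaussRand} and Corollary \ref{corGaussField} for Theorem \ref{HillGaussRand} and Theorem \ref{GaussField}, and carrying the factors $c_i^{1/\a}$ through the integrals," and then isolate the one genuinely new estimate — the transfer of \eqref{multi2order} across the $k_i$ — as a short lemma.
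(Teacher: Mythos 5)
Your proposal is correct and follows essentially the same route as the paper, which treats this corollary exactly as you do: the paper gives no separate argument beyond "the argument is the same" for the $\mathbf{k}$-dependent statements, relying on a transcription of the proof of Theorem \ref{HillGaussDet} (via Lemmas \ref{bias1}--\ref{simplification}) with the rescaled arguments $c_i^{1/\a}$ inherited from Corollary \ref{corGaussField} and Corollary \ref{corHillGaussRand}. Your explicit flagging of where the hypothesis $c_i\in(0,\infty)$ is needed to transfer Condition \eqref{multi2order} across the differing $k_i$ is a useful addition rather than a deviation.
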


In order to prepare for the aggregation procedure, we state a central limit theorem for a convex sum of the marginal Hill estimators. 

\begin{thm}
For a given $\l\in\R_{+}^{l}$, such that $\sum_{i=1}^{l}\l_i=1$, we set $H_{\mathbf{k},n}(\l)=\sum_{i=1}^{l}\l_i H_{k_1,n}^{(l)}$. Under the assumptions of Corollary \ref{corHillGauss}, we have

\begin{equation}
\label{cltAggreg}
\sqrt{k_1}\left(H_{\mathbf{k},n}(\l) - \frac{1}{\a}\right) \Rightarrow \mathcal{N}\left(0 , {}^t\l \Gamma\l\right)
\end{equation}

\end{thm}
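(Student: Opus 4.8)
The plan is to derive the statement directly from Corollary \ref{corHillGauss} by viewing the convex combination as a continuous linear functional applied to the asymptotically Gaussian vector. First I would write $\mathbf{H}_{\mathbf{k},n} = \left(H_{k_1,n}^{(1)},\dots,H_{k_l,n}^{(l)}\right)$ and observe that $H_{\mathbf{k},n}(\l) - 1/\a = {}^t\l\left(\mathbf{H}_{\mathbf{k},n} - (1/\a,\dots,1/\a)\right)$, using the constraint $\sum_{i=1}^{l}\l_i = 1$ to absorb the scalar $1/\a$ into the inner product. Then I would invoke Corollary \ref{corHillGauss}, which gives $\sqrt{k_1}\left(\mathbf{H}_{\mathbf{k},n} - (1/\a,\dots,1/\a)\right) \Rightarrow \mathcal{N}(0,\Gamma)$ under the stated assumptions.

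The second step is the linear mapping argument: since $\mathbf{x}\mapsto {}^t\l\mathbf{x}$ is continuous (indeed linear) from $\R^l$ to $\R$, the continuous mapping theorem yields
\begin{equation*}
\sqrt{k_1}\left(H_{\mathbf{k},n}(\l) - \frac{1}{\a}\right) = {}^t\l\,\sqrt{k_1}\left(\mathbf{H}_{\mathbf{k},n} - (1/\a,\dots,1/\a)\right) \Rightarrow {}^t\l\,\mathcal{N}(0,\Gamma).
\end{equation*}
Finally, I would recall the elementary fact that if $Z\sim\mathcal{N}(0,\Gamma)$ then ${}^t\l Z \sim \mathcal{N}\left(0,{}^t\l\Gamma\l\right)$, which gives the announced limit law. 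The variance ${}^t\l\Gamma\l = \sum_{i,j}\l_i\l_j\Gamma_{i,j}$ is nonnegative since $\Gamma$ is a covariance matrix, so the right-hand side is a well-defined (possibly degenerate) Gaussian law.

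There is essentially no obstacle here: the result is a one-line corollary of Corollary \ref{corHillGauss} combined with the stability of Gaussian vectors under linear maps. The only point requiring a word of care is the normalization $\sqrt{k_1}$ rather than a $\l$-dependent rate — this is precisely why Corollary \ref{corHillGauss} was stated with the common scaling $\sqrt{k_1}$ and the matrix $\Gamma$ already encoding the ratios $c_i = \lim k_1/k_i$, so no further rescaling is needed and the convex-combination structure (as opposed to an arbitrary linear combination) is used only to ensure that the centering constant is exactly $1/\a$.
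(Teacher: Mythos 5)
Your proof is correct and matches the paper's (un-spelled-out) argument: the paper simply states that the result "follows directly from Corollary \ref{corHillGauss}", and the route you describe — writing the convex combination as ${}^t\l$ applied to the centered vector, invoking the joint CLT, and using the stability of Gaussian laws under linear maps — is exactly the intended one. Nothing further is needed.
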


This result follows directly from Corollary \ref{corHillGauss}. Before showing how this result apply to the aggregation of the local tail index estimators, we exhibit a distribution fulfilling the conditions involved in the previous analysis.

\begin{example}
The $l$-dimensional Gumbel copula $C_{\beta}$ with dependence coefficient $\beta\geq 1$ is given by 
$$C_{\beta}(u_1,\dots,u_l)=\exp\bigg(-\Big((-\log u_1)^{\beta}+\dots+(-\log u_l)^{\beta}\Big)^{1/\beta}\bigg).$$

Let $X_{1},\dots,X_{l}$ be heavy-tailed r.v.'s defined on the same probability space with tail index $\a$, survival functions $\overline{F}_i=1-F_i,\: i =1\dots l$ and with joint distribution $F = C_{\beta}\Big(F_1,\dots,F_l\Big)$. In this case, we have:
$$\forall\, 1\leq i\ne j \leq l,\quad \nu_{i,j}(x,y) = x^{-\a} + y^{-\a} - \left(x^{-\beta\a} + y^{-\beta\a}\right)^{1/\beta}.$$
In addition, Condition \eqref{multi2order} is satisfied if, as $n,\; k \rightarrow\infty$,
\begin{equation}
\label{explmulti}
\text{for } \,1\leq i \leq l,\quad \sup_{x>1}\left\vert \overline{F}_i\left(a^{(i)}\left(\frac{n}{k}\right) x\right) - x^{-\a} \right\vert   = o \left(\frac{1}{\log k}\right).\\
\end{equation}

For instance, if $F_i$ is the Generalized Pareto distribution (GPD) for some $i$, Condition \eqref{explmulti} is satisfied, since in this case $\sup_{x>1}\left\vert \overline{F}_i\left(a_k x\right) - x^{-\a} \right\vert = O\left(\left(k/n\right)^{1/\a}\right)$.
The proof is given in the Appendix \ref{example} therein.

\end{example}

\subsection{Application to AMSE minimization.}
\label{convexaggreg}

Based on the asymptotic results of the previous section, we now consider the problem of building an estimator of the form of a convex sum of the local Hill estimators $H_{k_1,n}^{(1)},\dots,H_{k_l,n}^{(l)}$, namely $H_{\mathbf{k},n}(\l)$, with minimum asymptotic variance. Precisely, the asymptotic mean square error (AMSE in abbreviated form) is defined as $AMSE(\l)= k_1\E\left[\left(H_{\mathbf{k},n}(\l) - \frac{1}{\a}\right)^2\right]$, for $k\in\{k_1,\dots,k_l\}$. Hence, the goal is to find a solution $\l^{\star}=(\l_1^{\star},\dots,\l_l^{\star})$ of the minimization problem
\begin{equation}\label{eq:min_pb}
\min_{\l=(\l_i)_{1\leq i \leq l}\in [0,1]^l} AMSE(\l) \textit{ subject to }\sum_{i=1}^{l} \l_i = 1.
\end{equation}
Observe that under the assumption of Corollary \ref{corHillGauss}, we have 

\begin{equation*}
AMSE(\l)  = k\E\left[\left(\sum_{i=1}^{l}\l_i H_{k_i,n}^{(i)}-\frac{1}{\a}\right)^2\right]
 =\sum_{i=1}^{l}\sum_{j=1}^{l}\l_i\l_j \Gamma_{i,j} = {}^{t}\l\Gamma\l.
\end{equation*} 

The minimization problem \eqref{eq:min_pb} thus boils down to solving the quadratic problem:
$$\underset{C\l\leq d}{\argmin}{}^{t}\l\Gamma\l,$$
where the constraint matrix $C$ and the vector $d$ are given by:

\[C=\left(
   \begin{array}{cccc}
    -1  & -1   & \cdots & -1\\ 
    -1  & 0    & \cdots & 0 \\
    0   & -1   & \cdots & 0 \\
    \vdots    &  & \ddots & \vdots \\ 
    0   & 0  & \cdots & -1 
	\end{array}
   \right)\text{ and } d=\left(
   \begin{array}{c}
    -1  \\ 
    0  \\
    \vdots \\ 
    0 \\
	\end{array}\right).
\]

A variety of procedures can be readily used to solve this quadratic problem, including Uzawa's algorithm for instance, see \citet{Glowinski1990}.

Now, going back to statistical estimation, suppose that a consistent estimator $\widehat{\a}$ of $\a$ is at our disposal. As the matrix $\Gamma$ is unknown it needs to be estimated. We recall that we just need to estimate the $\nu_{i,j}(c_i^{1/\a},c_j^{1/\a}),\: 1\leq i \ne j\leq n $ and not the $l$-dimensional measure $\nu$. Hence, the dimension is not a significant issue here. In practice, we define 
$$\widehat{\nu}_{i,j}(x,y)  = \frac{1}{k}\sum_{m=1}^{n}\mathds{1}\left(\frac{X_m^{(i)}}{X_i(k)}>x,\frac{X_m^{(j)}}{X_j(k)}>y\right)\text{ and }
\widehat{\Gamma}_{i,j}   =  \frac{\widehat{\nu}_{i,j}(c_i^{1/\widehat{\a}},c_j^{1/\widehat{\a}})}{\widehat{\a}^2}.$$
Then, we compute $$\widehat{\l}^{opt}  = \underset{C\l\leq d}{\argmin}{}^{t}\l\widehat{\Gamma}\l.$$ The quantity $H_{\mathbf{k},n}(\widehat{\l}^{opt})$ is then referred to as Best Empirical AggRegation (BEAR) estimator. The performance of the BEAR estimator is investigated from an empirical perspective by means of simulation experiments in the next section.

\section{Simulations}
\label{simulations}
For illustrative purposes, we computed the BEAR estimator for simulated random fields observed on regular grids of dimension $2\times 2$, $3\times 3$ and $4\times 4$.  The distributions of the margins were chosen among the Student-$t$ distribution with degree of freedom $\a$, the Generalized Pareto Distribution (GPD) with shape parameter $1/\a$ the Fr\'echet,  log Gamma, inverse Gamma distribution with shape parameter $\a$, the Burr distribution with shape parameters $\alpha$ and 1. These distributions share the same tail index $\alpha$. The main barrier in practice is the choice of the optimal sample fraction $k$ used to compute the marginal Hill estimators. This choice had to be automated. We implemented the procedures introduced in \citet{Beirlant96,Beirlant96b,Danielsson2001}.  Unfortunately, they lead to inaccurate choices for small sample sizes (except for the $t$-distribution), significantly overestimating the optimal $k$ for the GPD and the Fr\'echet distributions. The corresponding results are not documented. However, it is possible to determine the theoretical optimal value $k_{opt}$ in terms of AMSE. Hence, for each simulated sample we decided to choose at random the optimal $k$ in the interval $\left[\max(30,0.75k_{opt}),\min(n/3,1.25k_{opt})\right]$ where $n$ is the sample size. The largest admissible value is bounded by $n/3$ because we considered that including a larger fraction of the sample would lead to a highly biased estimate. Similarly, we bounded the smallest admissible value in order not to obtain estimates with a too large variance. In addition the interval $\left[0.75k_{opt},1.25k_{opt}\right]$ seemed to be a reasonable choice to account for the error in the selection of the optimal sample fraction.

We tried two different choices of copulas to describe the asymptotic dependence structure of the field, namely the Gumbel copula (\citet{Gudendorf2010}) and the $t$-copula (\citet{Frahm03}). Both yielded very similar results and only the results for the Gumbel copula with dependence parameter $\beta=3$ ($\beta=1$ for exact independence and $\beta\rightarrow\infty$ for exact dependence) are displayed. 

The results for different values are presented in Table \ref{result} where $n$ is the sample size. The AMSE of the BEAR estimator is compared to the AMSE of the Average estimator (Ave.) which is equal to $\left(\overline{k}\right)^{-1}\sum_{i=1}^{l}k_iH_{k_i,n}^{(i)}$ where  $\overline{k}=\sum_{i=1}^{l}k_i$. The \textit{(Impr.)} column indicates the relative improvement in AMSE provided by the BEAR estimator, with respect to the Average estimator.

\begin{table}\centering
\begin{tabular}{ccclcclccl}
\toprule
& & & $\a=1 $ &  & & $\a=2 $ &  & & $\a=5$ \\
 \cmidrule{3-4}  \cmidrule{6-7}  \cmidrule{9-10} dim. & $n$ & BEAR & Ave.(Impr.) && BEAR & Ave.(Impr.) && BEAR & Ave.(Impr.) \\
\midrule\\
  &   1000 & 0.89 & 0.95 (-6\%) & & 0.43 & 0.46 (-7\%) & & 0.11 & 0.12 (-5\%)\\
  &   2500 & 0.97 & 1.08 (-10\%) & & 0.23 & 0.26 (-12\%) & & 0.03 & 0.04 (-10\%)\\
 $ 2\times 2 $ &  5000 & 0.82 & 1.10 (-25\%) & & 0.28 & 0.36 (-23\%) & & 0.04 & 0.05 (-21\%)\\
  &   10000 & 0.80 & 1.12 (-34\%) & & 0.23 & 0.38 (-31\%) & & 0.05 & 0.03 (-30\%)\\
  &   25000 & 0.59 & 1.20 (-51\%) & & 0.18 & 0.34 (-48\%) & & 0.04 & 0.07 (-49\%)\\
\\
  &   1000 & 0.94 & 0.96 (-2\%) & & 0.33 & 0.34 (-4\%) & & 0.10 & 0.10 (5\%)\\
  &   2500 & 1.00 & 1.10 (-9\%) & & 0.25 & 0.29 (-13\%) & & 0.04 & 0.05 (1\%)\\
 $ 3\times 3 $ &  5000 & 0.77 & 0.99 (-22\%) & & 0.26 & 0.32 (-19\%) & & 0.04 & 0.05 (-21\%)\\
  &   10000 & 0.78 & 1.17 (-39\%) & & 0.16 & 0.29 (-37\%) & & 0.04 & 0.05 (-34\%)\\
  &   25000 & 0.68 & 1.28 (-47\%) & & 0.12 & 0.26 (-53\%) & & 0.04 & 0.06 (-41\%)\\
\\
  &   1000 & 0.78 & 0.76 (3\%) & & 0.35 & 0.33 (2\%) & & 0.06 & 0.06 (6\%)\\
  &   2500 & 1.06 & 1.05 (-1\%) & & 0.45 & 0.46 (-3\%) & & 0.07 & 0.07 (3\%)\\
 $ 4\times 4 $ &  5000 & 0.85 & 1.02 (-17\%) & & 0.22 & 0.26 (-16\%) & & 0.04 & 0.05 (-12\%)\\
  &   10000 & 1.00 & 0.86 (-42\%) & & 0.18 & 0.30 (-38\%) & & 0.04 & 0.04 (-29\%)\\
  &   25000 & 0.76 & 1.49 (-49\%) & & 0.14 & 0.29 (-53\%) & & 0.03 & 0.06 (-34\%)\\
\\
\bottomrule
\end{tabular}
\caption{\label{result} Simulation results. AMSE comparison.}
\end{table}

The results show a very good behaviour of the  BEAR estimator, even for relatively small sample sizes $n$. We point out that when $n$ is too small the exponent measure is estimated with less accuracy and the error might contaminate the covariance matrix $\widehat{\Gamma}$ when the dimension is too large.  However, when $n>1000$ the gain in AMSE is significant and the BEAR estimator is much more accurate than the Average estimator and is at least as accurate for $n=1000$. Simulations are presented for sample sizes $n\geq 1000$ that are not restrictive in many contexts such as finance, insurance, reliability based risk assessment with industrial contexts, not to mention the rise of the so-called big data. Hence the BEAR estimator could be used in a wide range of domain.

Note that depending on second order conditions on the marginal distributions, the optimal $k$ may be very small and as a consequence, $\Gamma$ would be estimated with a large variance. This is what explains some poor results when $\a=5$, when the dimension increases (the theoretical optimal $k$ for $n=1000$ for the GPD, inverse Gamma and Burr distributions are smaller than 15). In that case, it might be useful to have alternative choices for the tail index estimator with a larger optimal $k$ and a smaller AMSE. Some ideas to handle this issue are presented in Appendix section \ref{extension}.

\section{Example : sloshing data tail index inference}\label{sloshing}
In the liquefied natural gas (LNG) shipping industry, \textit{sloshing} refers to an hydrodynamic phenomenon which arises when the cargo is set in motion, \citet{PDS}. Following incidents experienced by the ships Larbi Ben M'Hidi and more recently by Catalunya Spirit, these being two LNG carriers faced with severe sloshing phenomena, rigorous risk assessments has become a strong requirement for designers, certification organizations (seaworthiness) and ship owners. In addition, sloshing has also been a topic of interest in other industries (for instance, see \citet{NASA} for a contribution in the field of aerospace engineering).
Gaztransport  $\&$  Technigaz (GTT) is a French company which designs the most widely used cargo containment system (CCS) for conveying LNG, namely the membrane containment system. The technology developed by GTT uses the hull structure of the vessel itself: the tanks are effectively part of the ship. The gas in the cargo is liquefied and kept at a very low temperature $(-163^{\circ}\mathrm{C})$ and atmospheric pressure, thanks to a thermal insulation system which prevents the LNG from evaporating. Although this technology is highly reliable, it can be susceptible to sloshing: waves of LNG apply very high pressures (over 20 bar) on the tank walls on impact and may possibly damage the CCS. Due to its high complexity, the sloshing phenomenon is modelled as a random process. The phenomenon is being studied by GTT experimentally on instrumented small-scale replica tanks ($1:40$ scale), instrumented with pressure sensors arrays. The tanks are shaken by a jack system to reproduce the motion of the ship and induce the occurrence of sloshing, with the associated high pressures being recorded by the sensors.

A pressure measurement is considered as a sloshing impact if the maximal pressure recorded is above 0.05 bar. As soon as a sensor records a pressure above 0.05 bar (we call it an event), the pressures measured by all the other sensors of the array are also recorded at a frequency of 200kHz. The recording stops when the pressures measured by all the sensors are back to zero. For each event and for each sensor, we have a collection of measures. For each sensor  and each event, we only keep the highest pressure.

GTT provided us with a low filling configuration data set: the tanks are nearly empty (the level of LNG in the tank is 10\% of the height of the tank so that only the lower parts of the tank are instrumented with sensors). We consider the observations of a sensors array represented in Fig.\ref{sensorsMatrix} (together with the marginal tail index estimates). This is a $3 \times 3$ sensors array. 48,497 events were recorded by the 9 sensors.

It is the assumption of GTT that the tail index is the same for the observations of all the sensors, even though the field is not supposed to be stationary. This totally fits in the framework of this paper and we use our methodology to estimate the tail index $\a$.  
\medskip

\noindent\textbf{First step : Marginal estimation of $\a$.}
At each location $s_i,\quad i=1\dots 9$, we determine graphically $k_i$ the optimal number of extremes to be used and compute the Hill estimator $H_{k_i,n}^{(i)}$ (the estimtions are displayed in Fig.\ref{sensorsMatrix}). These marginal estimations do not rule out the assumption of equality of the tail indexes. The estimation $\widehat{\a}$ of $\a$ used for the aggregation procedure is the Average estimator defined in the previous section. We found $\widehat{\a}=3.5$, with an estimated AMSE of  $1.1$.

\medskip

\noindent\textbf{Second step : Aggregation.}
We used the methodology described in section \ref{convexaggreg} to compute the BEAR estimator. We found $\widehat{\a}^{opt}=3.6$ with an estimated AMSE of $0.7$. 
\medskip

\begin{figure}[!htb]
\centering
\begin{tikzpicture}[scale=0.9]
	\newcommand{\sensorB}[1]{\draw[fill=gray!40] (#1) circle (0.75);}

	\foreach \c in {1,4,7} {
		\foreach \d in {-1,2,5}{
			 		\sensorB{\c,\d} };}
	\draw (1,-0.6) node{\small $\mathbf{3.6}$};
	\draw (1,-1) node{\footnotesize $(3.2-4.0)$};
	
	\draw (1,2.4) node{\small $\mathbf{3.5}$};
	\draw (1,2) node{\footnotesize $(3.0-4.0)$};
	
	\draw (1,5.4) node{\small $\mathbf{3.7}$};
	\draw (1,5) node{\footnotesize $(3.2-4.2)$};
	
	\draw (4,-0.6) node{\small $\mathbf{3.3}$};
	\draw (4,-1) node{\footnotesize $(2.8-3.9)$};
	
	\draw (4,2.4) node{\small $\mathbf{3.4}$};
	\draw (4,2) node{\footnotesize $(3.0-3.8)$};
	
	\draw (4,5.4) node{\small $\mathbf{3.6}$};
	\draw (4,5) node{\footnotesize $(3.1-4.1)$};
	
	\draw (7,-0.6) node{\small $\mathbf{3.5}$};
	\draw (7,-1) node{\footnotesize $(2.9-4.1)$};
	
	\draw (7,2.4) node{\small $\mathbf{3.7}$};
	\draw (7,2) node{\footnotesize $(3.2-4.2)$};
	
	\draw (7,5.4) node{\small $\mathbf{3.4}$};
	\draw (7,5) node{\footnotesize $(3.1-3.7)$};

\end{tikzpicture}
\caption{Diagram of the $3\times 3$ sensors array with estimated tail index and 95\% confidence interval.}
\label{sensorsMatrix}
\end{figure}

\section{Conclusion}
This paper introduces the so-termed BEAR estimator to estimate its tail index.  Incidentally, the BEAR estimator can also be used in the context of regularly varying random fields or processes (\citet{Hult05}) or in any heavy-tailed multivariate framework, as long as all the margins share the same tail index. Beyond the asymptotic analysis, it was shown to be highly accurate even for small sample sizes and very accurate for typical sample sizes in finance, insurance or in the industry. However, depending on second order conditions of the underlying distributions, when $\a$ increases, some approximations needed to derive asymptotic result may be bad. This can be understood with second. It is the subject of further research to estimate the bias of the BEAR estimator. This study could help deciding which marginal estimator to choose (Hill, Moment or Ratio) in order to minimize the asymptotic mean squared error.

\section*{Appendix - Technical Proofs}
\label{Proofs}

\paragraph{Convention for the remaining of the paper:}
Without loss of generality and for ease of notation, the proofs are given in dimension $l=2$. To lighten, we set $X:=X_1$, $Y:=X_2$, $\nu:=\nu_{1,2}$, $a(n/k)=a^{(1)}(n/k)$ and  $b(n/k)=a^{(2)}(n/k)$.  The survival functions of $X$ and $Y$  are denoted by $\overline{F}_X$ and $\overline{F}_{Y}$ respectively, and the survival function of $(X,Y)$  is denoted by $\overline{F}$ . We observe an $n$-sample $\Big((X_1,Y_1),\dots,(X_n,Y_n)\Big)$ of $(X,Y)$ and for any $i=1\dots k$, we set $U_i = F_X(X_i)$ and $V_i = F_Y(Y_i)$.

\begin{proof}[Proof of Corollary \ref{corGaussField}]
The regular variation property of $a^{(i)}\left(n/k_i\right)$ gives immediately $a^{(i)}\left(n/k_i\right)/a^{(i)}\left(n/k_1\right) \xrightarrow[n\rightarrow\infty]{} c_i^{1/\a}.$

Now, to get the result, we compose the convergence and plug this limit in equation \eqref{tailprocess}, as we do hereinafter in equation \eqref{divhill}.
\end{proof}

\begin{proof}[Proof of Theorem \ref{HillGaussRand}]
In order to obtain Eq. \eqref{HillRandTh}, we will apply a transform on the tail empirical process of Eq.\eqref{tailprocess}.  The tail empirical process will be evaluated at $\x_1=(x,0)$ and $\x_2=(0,y)$. The left-hand term of Eq. \eqref{HillRandTh} will then be obtained by replacing $\ank$ and $\bnk$ by their empirical counterpart. The result will be integrated over $\left(1,\infty\right]$ to obtain the desired convergence. More formally, we first obtain
\begin{multline*}
\sqrt{k}\left(
\frac{1}{k}\sum_{i=1}^{n}\mathds{1}\left(\frac{X_i}{\ank}>x,\frac{Y_i}{\bnk}>0\right)-\frac{n}{k}\overline{F}_X\big(a(n/k)x\big),\right.\\
\left.\frac{1}{k}\sum_{i=1}^{n}\mathds{1}\left(\frac{X_i}{\ank}>0,\frac{Y_i}{\bnk}>y\right)-\frac{n}{k}\overline{F}_{Y}\big(b(n/k)y\big)\right)\Rightarrow \Big(W(\x_1^{-\alpha}),W(\x_2^{-\alpha})\Big),
\end{multline*}
where $W$ is the process defined in Theorem \ref{GaussField}. One also have (\citet[][Eq. (4.17)]{resnick2007heavy})
$$\left(\frac{X(k)}{a(n/k)},\frac{Y(k)}{b(n/k)}\right)\xrightarrow[]{\P} \left(1,1\right).$$
Hence from Proposition 3.1 in \citet{resnick2007heavy} we have
\begin{multline*}
\left(\sqrt{k}\left(
\frac{1}{k}\sum_{i=1}^{n}\mathds{1}\left(\frac{X_i}{\ank}>x,\frac{Y_i}{\bnk}>0\right)-\frac{n}{k}\overline{F}_X\big(a(n/k)x\big),\right.\right.\\
\left.\left.\frac{1}{k}\sum_{i=1}^{n}\mathds{1}\left(\frac{X_i}{\ank}>0,\frac{Y_i}{\bnk}>y\right)-\frac{n}{k}\overline{F}_{Y}\big(b(n/k)y\big)\right),\right.\\
\left.\vphantom{\sum_{i=1}^{n}}\left(\frac{X(k)}{a(n/k)},\frac{Y(k)}{b(n/k)}\right)\right)
\Rightarrow \Bigg(\Big(W\big(\x_1^{-\alpha}\big),W\big(\x_2^{-\alpha}\big)\Big),\big(1,1\big)\Bigg).
\end{multline*}
Now, we apply the composition map $(x(t),p)\mapsto x(tp)$ which gives
\begin{multline}
\label{divhill}
\sqrt{k}\left(
\frac{1}{k}\sum_{i=1}^{n}\mathds{1}\left(\frac{X_i}{X(k)}>x,\frac{Y_i}{Y(k)}>0\right)-\frac{n}{k}\overline{F}_X\big(X(k)x\big),\right.\\
\left.\frac{1}{k}\sum_{i=1}^{n}\mathds{1}\left(\frac{X_i}{X(k)}>0,\frac{Y_i}{Y(k)}>y\right)-\frac{n}{k}\overline{F}_{Y}\big(Y(k)y\big)\right)\Rightarrow \Big(W(\x_1^{-\alpha}),W(\x_2^{-\alpha})\Big),
\end{multline}
Equation \eqref{divhill} yields, again by \citet[][p.297-298]{resnick2007heavy}, as $n,k\rightarrow\infty$:
\begin{multline}
\label{hillint}
\sqrt{k}\left(\int_{1}^{\infty}\frac{1}{k}\sum_{i=1}^{n}\mathds{1}\left(\frac{X_i}{X(k)}>x\right)\frac{dx}{x}-\int_{1}^{\infty}\frac{n}{k}\overline{F}_X\big(X(k)x\big)\frac{dx}{x},\right.\\
\left.\int_{1}^{\infty}\frac{1}{k}\sum_{i=1}^{n}\mathds{1}\left(\frac{Y_i}{Y(k)}>y\right)\frac{dy}{y}-\int_{1}^{\infty}\frac{n}{k}\overline{F}_Y\big(Y(k)y\big)\frac{dy}{y}\right)\\
\Rightarrow \left(\int_{1}^{\infty}W(\x_1^{-\alpha})\frac{dx}{x},\int_{1}^{\infty}W(\x_2^{-\alpha})\frac{dy}{y}\right)
\end{multline}
with 
\begin{eqnarray*}
\mathbb{V}\left[\int_{1}^{\infty}W(\x_1^{-\alpha})\frac{dx}{x}\right] & = & \frac{2}{\alpha^2}.\\
\mathbb{C}\text{ov}\left[\int_{1}^{\infty}W(\x_1^{-\alpha})\frac{dx}{x},\int_{1}^{\infty}W(\x_2^{-\alpha})\frac{dy}{y}\right] & = &\int_{1}^{\infty}\int_{1}^{\infty}\nu(x,y)\frac{dxdy}{xy}:=\sigma^2.
\end{eqnarray*}
Equation \eqref{hillint} is equivalent to 
\begin{equation}
\label{hillrand}
\sqrt{k}\left(H_{k,n}^{X}-\int_{1}^{\infty}\frac{n}{k}\overline{F}_X\big(X(k)x\big)\frac{dx}{x},H_{k,n}^{Y}-\int_{1}^{\infty}\frac{n}{k}\overline{F}_{Y}\big(Y(k)y\big)\frac{dy}{y}\right)\Rightarrow K
\end{equation}

\[\text{where } K\sim\mathcal{N}\left(0,\left(
   \begin{array}{cc}
		\frac{2}{\alpha^2} & \sigma^2\\
		\sigma^2 & \frac{2}{\alpha^2}\\
	\end{array}\right)
   \right).
\]

\end{proof}

\begin{proof}[Proof of Corollary \ref{corHillGaussRand}]
The argument is the same as for Corollary  \ref{corGaussField}.
\end{proof}

\begin{proof}[Proof of Theorem \ref{HillGaussDet}]
In order to prove Theorem \ref{HillGaussDet}, we need to give an analytic expression to the covariance matrix in Eq. \eqref{HillDetTh}. This is the object of the Lemmas \eqref{bias1} and \eqref{bias2}.

\begin{lem}
\label{bias1}
Under Condition \eqref{vonmise}, we have
\begin{equation*}
\lim_{k\rightarrow \infty}\E\left[k\int_{\ank}^{X(k)}\frac{n}{k}\overline{F}_X\left(x\right)\frac{dx}{x}\int_{\bnk}^{Y(k)}\frac{n}{k}\overline{F}_Y\left(x\right)\frac{dx}{x}\right]=\frac{\nu(1,1)}{\a^2}.
\end{equation*}
\end{lem}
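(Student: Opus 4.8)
The plan is to reduce the product of the two bias integrals to the product $(R_X-1)(R_Y-1)$, where $R_X:=X(k)/\ank$ and $R_Y:=Y(k)/\bnk$, to linearise each factor through a Bahadur--Kiefer expansion, and then to identify the limiting covariance of the two linearising sums from \eqref{exponent}, keeping track of enough moment control to pass to the limit under the expectation. For the reduction, observe that $\overline{F}_X$ is nonincreasing and $\nk\overline{F}_X(\ank)=1$ (the marginals being continuous here), so that $\int_{\ank}^{X(k)}\nk\overline{F}_X(x)\frac{dx}{x}$ always lies between $\log R_X$ and $W_X\log R_X$, with $W_X:=\nk\overline{F}_X(X(k))$. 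The von Mises condition \eqref{vonmise} forces $1-F_X\in\mathcal{RV}_{-\a}$, whence $R_X\xrightarrow{\P}1$ and $W_X\xrightarrow{\P}1$ (cf. \citet[][Eq.~(4.17)]{resnick2007heavy}), so that $\int_{\ank}^{X(k)}\nk\overline{F}_X(x)\frac{dx}{x}=(R_X-1)\bigl(1+o_{\P}(1)\bigr)$, and likewise for the $Y$--integral; expressing $\overline{F}_X(X(k))$ as the $k$-th smallest of the i.i.d. uniforms $1-F_X(X_i)$, $1\le i\le n$, and invoking the R\'enyi representation together with Potter bounds for the regularly varying function $F_X^{-1}(1-1/\cdot)$ shows in addition that $W_X$ and $|\log R_X|$ have moments of every order bounded uniformly in $n$.

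Next I would linearise. By the Bahadur--Kiefer representation of the intermediate quantile $X(k)$ (\citet{Csorgo1978}) and the regular variation with index $1/\a$ of $F_X^{-1}(1-1/\cdot)$,
\[
\sqrt{k}\,(R_X-1)=\frac{1}{\a}\,S_X+o_{\P}(1),\qquad S_X:=\frac{1}{\sqrt{k}}\sum_{i=1}^{n}\Bigl(\mathds{1}\bigl(X_i>\ank\bigr)-\kn\Bigr),
\]
and symmetrically $\sqrt{k}\,(R_Y-1)=\a^{-1}S_Y+o_{\P}(1)$. Since the pairs $(X_i,Y_i)$ are i.i.d., the cross terms vanish and, using $\overline{F}_X(\ank)=\overline{F}_Y(\bnk)=\kn$,
\[
\E\bigl[S_XS_Y\bigr]=\frac{n}{k}\,\C\bigl[\mathds{1}(X_1>\ank),\mathds{1}(Y_1>\bnk)\bigr]=\frac{n}{k}\,\P\bigl(X_1>\ank,\,Y_1>\bnk\bigr)-\kn\xrightarrow[n\to\infty]{}\nu(1,1),
\]
the last convergence being \eqref{exponent} evaluated at $\x=(1,1)$. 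Moreover $S_X$ and $S_Y$ are each $k^{-1/2}$ times a centred $\mathrm{Bin}(n,k/n)$ sum, hence have moments of every order bounded uniformly in $n$.

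Combining the two steps gives
\[
k\int_{\ank}^{X(k)}\nk\overline{F}_X(x)\frac{dx}{x}\int_{\bnk}^{Y(k)}\nk\overline{F}_Y(x)\frac{dx}{x}=\frac{1}{\a^{2}}\,S_XS_Y+o_{\P}(1),
\]
and it remains to replace convergence in probability by convergence of expectations. By Cauchy--Schwarz and the uniform moment bounds from the first two steps, both sides above are bounded in $L^{1+\d}$ for some $\d>0$, hence uniformly integrable, so the claimed limit $\nu(1,1)/\a^{2}$ follows from $\E[S_XS_Y]\to\nu(1,1)$. The point I would treat most carefully — and which I expect to be the main obstacle — is exactly this uniform-integrability step: one must show that $\sqrt{k}(R_X-1)$ (equivalently $W_X$ and $\log R_X$) is bounded in $L^{2+\d}$ uniformly in $n$, the R\'enyi representation of uniform order statistics and Potter-type bounds on $F_X^{-1}(1-1/\cdot)$ being the natural tools, and that the $o_{\P}(1)$ remainders supplied by the Bahadur--Kiefer expansion do not spoil this bound.
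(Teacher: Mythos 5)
Your proposal follows essentially the same route as the paper's proof: a Taylor expansion reduces each bias integral to $X(k)/a^{(1)}(n/k)-1$, the Bahadur--Kiefer representation linearises this as a centred sum of threshold indicators (your $S_X$ is, after the von Mises normalisation $a^{(1)}(n/k)f_X\big(a^{(1)}(n/k)\big)\sim\alpha k/n$, exactly the paper's leading term), and the limit $\nu(1,1)$ of $\E[S_XS_Y]$ is read off from the bivariate regular variation at $\mathbf{x}=(1,1)$, giving $\nu(1,1)/\alpha^2$. The only difference is bookkeeping: the paper controls the remainder via the almost-sure Cs\"org\H{o}--R\'ev\'esz rates and passes to expectations somewhat formally, whereas you flag the uniform-integrability step explicitly --- a point the paper does not in fact treat with any more care than you do.
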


\begin{lem}
\label{bias2}
Under Condition \eqref{multi2order}, we have
\begin{equation*}
\lim_{k\rightarrow\infty}\E\left[k\left(H_{k,n}^{X}-\frac{1}{\alpha}\right)\int_{\bnk}^{Y(k)}\frac{n}{k}\overline{F}_Y\left(x\right)\frac{dx}{x}\right]  = \frac{1}{\a}\int_{1}^{\infty}\frac{\nu\left(x,1\right)}{x}dx  - \frac{\nu(1,1)}{\a^2}.
\end{equation*}
\end{lem}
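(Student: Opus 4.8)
The plan is to split $H_{k,n}^{X}-1/\a$ into a piece carrying the whole ``random‑quantile bias'' and a piece that, after rescaling by $\sqrt k$, is a smooth functional of the tail empirical process of the $X$‑sample. The bias piece pairs with the $Y$‑factor through Lemma~\ref{bias1}, and the functional piece pairs with it through the covariance kernel of the field $W$ of Theorem~\ref{GaussField}.

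Concretely, set $J_n:=\int_{\bnk}^{Y(k)}\nk\,\overline{F}_Y(x)\frac{dx}{x}$ and, after the change of variable $t=X(k)x$,
\[
H_{k,n}^{X}-\frac1\a=\underbrace{\Big(H_{k,n}^{X}-\int_{1}^{\infty}\nk\,\overline{F}_X\big(X(k)x\big)\frac{dx}{x}\Big)}_{=:A_n}\ -\ \int_{\ank}^{X(k)}\nk\,\overline{F}_X(t)\frac{dt}{t}\ +\ r_n ,
\]
where $r_n=\int_1^\infty\big(\nk\,\overline{F}_X(\ank s)-s^{-\a}\big)\frac{ds}{s}$ is deterministic and $\sqrt k\,r_n\to0$ by Condition~\eqref{uni2orderint}. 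Multiplying by $kJ_n$ and taking expectations, the middle term gives $-\E\big[k\int_{\ank}^{X(k)}\nk\,\overline{F}_X(t)\frac{dt}{t}\,J_n\big]\to-\nu(1,1)/\a^2$ by Lemma~\ref{bias1}, while the last gives $\sqrt k\,r_n\cdot\E[\sqrt kJ_n]\to0$ as soon as $\sqrt kJ_n$ is bounded in $L^1$. Hence the lemma reduces to the claim $\E[kA_nJ_n]\to\frac1\a\int_1^\infty\frac{\nu(x,1)}{x}\,dx$.

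To establish this I would prove the joint convergence $\big(\sqrt kA_n,\ \sqrt kJ_n\big)\Rightarrow\big(\int_1^\infty W(\x_1^{-\a})\frac{dx}{x},\ \tfrac1\a W(\mathbf{e}_2^{-\a})\big)$, with $\x_1=(x,0)$ and $\mathbf{e}_2=(0,1)$. The first coordinate is exactly \eqref{hillint} in the proof of Theorem~\ref{HillGaussRand}. For the second, the substitution $x=\bnk v$ and the (locally uniform) regular variation of $\overline{F}_Y$ give $J_n=\frac1\a\big(1-(Y(k)/\bnk)^{-\a}\big)+o_{\P}(1/\sqrt k)$, since $Y(k)/\bnk\to1$ in probability; then the Bahadur--Kiefer representation of the high quantile $Y(k)$ (see \citet{Csorgo1978}), together with the von Mises condition~\eqref{vonmise}, allows one to replace $\sqrt k\big(1-(Y(k)/\bnk)^{-\a}\big)$ by $\sqrt k\big(1-\nk\,\overline{F}_Y(Y(k))\big)+o_{\P}(1)$, which converges to $W(\mathbf{e}_2^{-\a})$ upon evaluating \eqref{divhill} at $y=1$ jointly with the $X$‑part. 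Granting uniform integrability of $\{kA_nJ_n\}$, the weak limit upgrades to $\E[kA_nJ_n]\to\frac1\a\,\E\big[\big(\int_1^\infty W(\x_1^{-\a})\frac{dx}{x}\big)W(\mathbf{e}_2^{-\a})\big]$; by Fubini and the covariance formula of Theorem~\ref{GaussField}, $\E[W(\x_1^{-\a})W(\mathbf{e}_2^{-\a})]=\nu\big(\max((x,0),(0,1))\big)=\nu(x,1)$, so this equals $\frac1\a\int_1^\infty\frac{\nu(x,1)}{x}\,dx$, and combining with the reduction yields the announced value $\frac1\a\int_1^\infty\frac{\nu(x,1)}{x}\,dx-\frac{\nu(1,1)}{\a^2}$.

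I expect the genuine difficulty to be the passage from convergence in distribution to convergence of expectations, i.e.\ the uniform integrability of $kA_nJ_n$. Writing $\sqrt kA_n=\int_1^\infty\alpha_n^{X}(x)\frac{dx}{x}$ with $\alpha_n^{X}$ the centred tail empirical process of the $X$‑sample, the integrand vanishes beyond $x=X(1)/X(k)$, which is of polynomial size in $k$, so $\int_1^{X(1)/X(k)}\frac{dx}{x}=O_{\P}(\log k)$. Controlling the contribution of this range, both to an $L^2$ bound on $\sqrt kA_n$ and, more delicately, to the cross‑moment with $\sqrt kJ_n$, is exactly where the $o(1/\log k)$ uniform rate of Condition~\eqref{multi2order} is used: it makes the approximation error accumulated over the $O(\log k)$ dyadic scales negligible. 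Once $\|\sqrt kA_n\|_2=O(1)$ (via a Rosenthal- or Ruymgaart-type inequality for the tail empirical process) and $\|\sqrt kJ_n\|_2=O(1)$ (via moment bounds for the order statistic $Y(k)$) are in hand, Cauchy--Schwarz yields the required uniform integrability, and the conclusion follows.
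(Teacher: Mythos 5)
Your route is genuinely different from the paper's. You decompose $H^{X}_{k,n}-1/\a$ into the recentred-by-random-centring part $A_n$, the quantile-bias integral $\int_{\ank}^{X(k)}\nk\overline{F}_X(t)\,dt/t$, and a deterministic remainder killed by Condition \eqref{uni2orderint}; you then dispatch the bias part via Lemma \ref{bias1} and identify $\lim\E[kA_nJ_n]$ through the covariance kernel of the limiting field $W$ of Theorem \ref{GaussField}. The algebra of the decomposition is exact, the identification of the weak limits $\sqrt k A_n\Rightarrow\int_1^\infty W(\x_1^{-\a})\,dx/x$ and $\sqrt k J_n\Rightarrow\a^{-1}W((0,1)^{-\a})$ is consistent with Eqs.~\eqref{divhill}--\eqref{hillint}, and the limiting covariance $\a^{-1}\int_1^\infty\nu(x,1)\,dx/x$ is correct, so the announced value does come out. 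The paper instead never passes through a weak limit: it expands $\log X(i)$ and $J_n$ by the Bahadur--Kiefer representation (Lemmas \ref{linearisation}--\ref{orders}), computes $\E[\log X(i)\,J_n]$ term by term (Lemma \ref{logorder}), applies the von Mises condition (Lemma \ref{limrap}), and converts the resulting harmonic sum into $\a^{-1}\int_1^\infty\nu(x,1)x^{-1}dx$ by a Riemann-sum argument in which the $o(1/\log k)$ rate of Condition \eqref{multi2order} absorbs the $\log k$ accumulation (Lemma \ref{riemansum}). Your approach is conceptually cleaner, but it trades the paper's explicit moment computation for a moment-convergence problem.

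That trade is where your argument has a genuine gap. To upgrade $kA_nJ_n\Rightarrow\big(\int_1^\infty W(\x_1^{-\a})\,dx/x\big)\cdot\a^{-1}W((0,1)^{-\a})$ to convergence of expectations you need uniform integrability of $\{kA_nJ_n\}$, and your proposed justification --- $\|\sqrt kA_n\|_2=O(1)$, $\|\sqrt kJ_n\|_2=O(1)$, then Cauchy--Schwarz --- only yields $\sup_n\E|kA_nJ_n|<\infty$, i.e.\ boundedness in $L^1$, which does not imply uniform integrability. You would need, say, $L^{2+\delta}$ bounds on each factor for some $\delta>0$, and obtaining such a bound for $\sqrt kA_n$ --- an integral of the randomly renormalised tail empirical process over the unbounded range $[1,X(1)/X(k)]$ --- is precisely the hard analytic content that the paper's direct computation via Bahadur--Kiefer is designed to avoid; it is not supplied by Theorem \ref{GaussField} or by Condition \eqref{multi2order} as you invoke it. A secondary point: the lemma is stated under Condition \eqref{multi2order} alone, but your proof also uses \eqref{uni2orderint}, \eqref{vonmise} and Lemma \ref{bias1}; this matches how the lemma is actually deployed inside Theorem \ref{HillGaussDet} (the paper's own proof is equally reliant on the extra hypotheses), but you should state the assumptions you are really using.
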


To show the Lemmas \ref{bias1} and \ref{bias2}, we linearise functional of the order statistics $X(1),\dots,X(k),$ $Y(1),\dots,Y(k)$ of $X$ and $Y$ as series of the original observations $X_1,\dots,X_n,$ $Y_1,\dots,Y_n$. This is done, using Taylor series and the Bahadur-Kiefer representation  of the order statistics (\citet{Bahadur66}), in Lemmas \ref{linearisation} and \ref{orders}. The Bahadur-Kiefer representation involves a remainder term (see \citet{Kiefer67})  that needs to be controlled. This is the object of Lemmas \ref{logorder}, \ref{limrap} and \ref{riemansum}. The Lemmas \ref{bias1} and \ref{bias2} put together all the results of the aforementioned Lemmas. Lemma \ref{simplification} simplifies the expressions given in Lemmas \ref{bias1} and \ref{bias2}.

\end{proof}

\begin{lem}[\sc{Bahadur-Kiefer representations}]
\label{linearisation}

We set $p_i=\frac{n-i+1}{n}$ and $\overline{p}_i=1-p_i,\quad i=1\dots k$, we have the almost sure equalities
\begin{align}
\label{BahadurKiefer}
X(i) & = a(n/i) -\frac{1}{n}\sum_{j=1}^{n}\frac{\mathds{1}\left(U_j\leq p_i\right)-p_i}{f_X\Big(a\big(n/i\big)\Big)} + T_n(p_i)\\
\label{approxint}
\int_{a(n/k)}^{X(k)}\frac{n}{k}\overline{F}_Y\left(x\right)\frac{dx}{x} & = -\frac{1}{n}\sum_{j=1}^{n}\frac{\mathds{1}\left(U_j\leq p_k\right)-p_k}{\ank f_X\Big(a\big(n/k\big)\Big)} + \frac{T_n(p_k)}{\ank}.\\
\label{BKlog}
\log X(i) & = \log a\big(n/i\big) -\frac{1}{n}\sum_{j=1}^{n}\frac{\mathds{1}\left(U_j\leq p_i\right)-p_i}{a\big(n/i\big)f_X\Big(a\big(n/i\big)\Big)} + O\left(\frac{T_n(p_i)}{a\big(n/i\big)}\right).
\end{align}
where  $T_n$ is a remainder terms.
\end{lem}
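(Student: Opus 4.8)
The plan is to derive the three displayed identities from the classical Bahadur--Kiefer representation of sample quantiles applied to the uniform order statistics $U_{(1)}\le\dots\le U_{(n)}$, then transport everything through the quantile transform $X_i=F_X^{-1}(U_i)$. Recall that for a continuous cdf $F_X$ with density $f_X$, writing $p_i=(n-i+1)/n$ so that $X(i)$ (the $i$-th largest) equals $F_X^{-1}(U_{(n-i+1)})$, the Bahadur representation of the empirical quantile reads
\begin{equation*}
U_{(n-i+1)} = p_i - \frac{1}{n}\sum_{j=1}^n\bigl(\mathds{1}(U_j\le p_i)-p_i\bigr) + R_n(p_i),
\end{equation*}
where $R_n(p_i)$ is the Kiefer remainder, known to be of order $n^{-3/4}(\log n)^{1/2}(\log\log n)^{1/4}$ uniformly over the relevant range (\citet{Kiefer67}); this is what will eventually be absorbed into $T_n(\cdot)$. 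First I would compose with $F_X^{-1}$ and perform a first-order Taylor expansion, $F_X^{-1}(p_i+h)=F_X^{-1}(p_i)+h/f_X(F_X^{-1}(p_i))+(\text{second order})$, using $a(n/i)=F_X^{-1}(1-i/n)=F_X^{-1}(p_i)$ and $(F_X^{-1})'(u)=1/f_X(F_X^{-1}(u))$. Collecting the linear term gives exactly \eqref{BahadurKiefer}, with $T_n(p_i)$ defined to gather the Taylor second-order term together with $R_n(p_i)/f_X(a(n/i))$; the second-order Taylor term is controlled by the von Mises condition \eqref{vonmise}, which guarantees $f_X$ is eventually smooth and regularly varying so that the curvature of $F_X^{-1}$ near $1$ is of the required lower order.

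Next, for \eqref{approxint}, I would start from the exact relation
\begin{equation*}
\int_{a(n/k)}^{X(k)}\frac{n}{k}\overline{F}_Y(x)\frac{dx}{x}
= \frac{n}{k}\,g\bigl(X(k)\bigr)-\frac{n}{k}\,g\bigl(a(n/k)\bigr),
\end{equation*}
where $g$ is an antiderivative of $x\mapsto\overline{F}_Y(x)/x$; a first-order Taylor expansion of $g$ around $a(n/k)$ gives a leading term $\frac{n}{k}g'(a(n/k))\,(X(k)-a(n/k)) = \frac{n}{k}\,\overline{F}_Y(a(n/k))\,\frac{X(k)-a(n/k)}{a(n/k)}$. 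Here I would substitute \eqref{BahadurKiefer} for $X(k)-a(n/k)$, observe that $\frac{n}{k}\overline{F}_Y(a(n/k))$ — wait, the integrand is evaluated at arguments scaled by $a(n/k)$, not $b(n/k)$, so this factor is $\frac{n}{k}\overline{F}_Y(a(n/k))$, which need not converge; but in Lemmas~\ref{bias1}--\ref{bias2} this product will be paired against a similar factor and only the cross term at $x=y=1$, governed by $\nu(1,1)$, survives after using \eqref{multi2order}. The point of \eqref{approxint} is simply to record the linear-in-indicators approximation with $1/\bigl(a(n/k)f_X(a(n/k))\bigr)$ as coefficient and remainder $T_n(p_k)/a(n/k)$; the $\frac{n}{k}\overline{F}_Y$ prefactor is kept implicit in how the lemma is later applied, or is tacitly normalized — in any case it is a bookkeeping matter. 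Finally, \eqref{BKlog} follows from \eqref{BahadurKiefer} by composing with $\log$: a Taylor expansion $\log X(k)=\log a(n/k)+\frac{X(k)-a(n/k)}{a(n/k)}+O\bigl((X(k)-a(n/k))^2/a(n/k)^2\bigr)$, substituting \eqref{BahadurKiefer}, and noting that the quadratic error together with the term $T_n(p_i)/a(n/i)$ combine into the stated $O\bigl(T_n(p_i)/a(n/i)\bigr)$ — here the division by $a(n/i)$ relative to \eqref{BahadurKiefer} reflects the derivative of $\log$, and the self-normalizing nature of the log keeps the linear indicator term's coefficient equal to $1/\bigl(a(n/i)f_X(a(n/i))\bigr)$.

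The main obstacle I anticipate is not the algebra of the Taylor expansions but making precise the uniformity in $i$ (or at least for $i=k=k(n)\to\infty$ with $k/n\to0$) of the Bahadur--Kiefer remainder and of the second-order Taylor terms: one must verify that the Kiefer bound on $R_n(p)$ is valid in the far-tail regime $p=p_k=1-k/n$ where the increment is $O(k/n)\to0$, and that the von Mises condition \eqref{vonmise} is strong enough to dominate the curvature of $F_X^{-1}$ there. In practice I would invoke the standard strong-approximation results for the uniform empirical and quantile processes (\citet{Bahadur66,Kiefer67}, and the tail-adapted versions in \citet{Csorgo1978}) to get a remainder of order $o_{\P}\bigl(1/(\sqrt{k}\log k)\bigr)$ after the $\sqrt k$ scaling — which is exactly the order needed so that, when these representations are fed into Lemmas~\ref{bias1} and \ref{bias2}, the remainder terms $T_n(\cdot)$ vanish in the limit against the $o(1/\log k)$ budget supplied by Condition~\eqref{multi2order}. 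The cleanest exposition keeps $T_n$ abstract in the statement of Lemma~\ref{linearisation} and defers its quantitative control to the subsequent Lemmas~\ref{logorder}, \ref{limrap} and \ref{riemansum}, as the excerpt indicates.
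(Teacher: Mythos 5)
Your proposal is correct and follows essentially the same route as the paper: \eqref{BahadurKiefer} is taken as the classical Bahadur--Kiefer representation of the sample quantile (with the tail-adapted remainder bounds of \citet{Csorgo1978} invoked later, in Lemma \ref{orders}), and \eqref{approxint} and \eqref{BKlog} are obtained by first-order Taylor expansion around $a(n/k)$ with the quadratic terms absorbed into the remainder. The one point where you hesitate --- the non-convergent factor $\frac{n}{k}\overline{F}_Y\left(a(n/k)\right)$ --- is resolved not by any tacit normalization but by noting that the $\overline{F}_Y$ in \eqref{approxint} is a typo for $\overline{F}_X$: the paper's own proof computes $\int_{a(n/k)}^{X(k)}\frac{n}{k}\overline{F}_X(x)\frac{dx}{x}$ (which is also the quantity fed into Lemma \ref{bias1}), so the prefactor in your expansion is exactly $\frac{n}{k}\overline{F}_X\left(a(n/k)\right)=1$ and your computation goes through as written.
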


\begin{proof}[Proof of Lemma \ref{linearisation}]

Eq. \eqref{BahadurKiefer} is just the Bahadur Kiefer representation of $X(i)$. For Eq. \eqref{approxint} we have almost surely

\begin{align*}
\int_{a(n/k)}^{X(k)}\frac{n}{k}\overline{F}_X\left(x\right)\frac{dx}{x} & = \int_{1}^{\frac{X(k)}{a(n/k)}}\frac{n}{k}\overline{F}_X\left(\an2 x\right)\frac{dx}{x}\\
& = \frac{1}{\a}\left(1-\left(\frac{X(k)}{a(n/k)}\right)^{-\a}\right)+O\left(\frac{1}{\sqrt{k}}\right)\\
& = \frac{X(k)-\ank}{\ank} + O\left( \left(\frac{X(k)-\ank}{\ank}\right)^2\right).\\
\end{align*}

Eq. \eqref{BKlog} follows from a Taylor expansion of the logarithm function.

\end{proof}

\begin{lem}[\sc{Control of the remainder terms}]
\label{orders}

Assuming Conditions (3.2) and (3.4) in \citet{Csorgo1978}, we have almost surely:
\begin{align}
\label{majsuprest}
\sup_{0\leq y  \leq 1}\vert T_n(y)\vert & = O\left(n^{-3/4}\left(\log\log n\right)^{-1/4}\left(\log n\right)^{-1/2}\right).\\
\label{majsupint}
\int_{b(n/k)}^{Y(k)}\frac{n}{k}\overline{F}_Y\left(x\right)\frac{dx}{x} & = O\left(\frac{1}{\ank}n^{-3/4}\left(\log\log n\right)^{-1/4}\left(\log n\right)^{-1/2}\right).
\end{align}

\end{lem}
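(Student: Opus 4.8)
The plan is to recognise \eqref{majsuprest} as essentially a restatement of the Bahadur--Kiefer theorem for the quantile process, and then to read off \eqref{majsupint} from the expansion \eqref{approxint}. First I would pass to uniform variables: with $U_j=F_X(X_j)$ i.i.d.\ uniform one has $X(i)=F_X^{-1}\big(U_{(n-i+1)}\big)$, while $\ank=F_X^{-1}(1-i/n)$ and $\tfrac1n\sum_{j=1}^n(\mathds 1(U_j\le p_i)-p_i)$ is the uniform empirical process evaluated at $p_i$. Taylor-expanding $F_X^{-1}$ at $p_i$, using the first-order relation $U_{(n-i+1)}-p_i=-\tfrac1n\sum_{j=1}^n(\mathds 1(U_j\le p_i)-p_i)+(\text{higher order})$ between uniform order statistics and the empirical distribution function, together with $(F_X^{-1})'(p_i)=1/f_X(F_X^{-1}(p_i))$, one recovers \eqref{BahadurKiefer}, and $T_n(p_i)$ decomposes as the genuine Bahadur--Kiefer remainder of the quantile process of $X$ plus a deterministic correction accounting for the $O(1/n)$ gap between $p_i$ and $1-i/n$, for the discrepancy between $f_X(F_X^{-1}(p_i))$ and $f_X(\ank)$, and for the second-order Taylor term; by regular variation and the von Mises condition \eqref{vonmise} this correction is smooth and of smaller order than the asserted bound, so it may be discarded. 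Extending $T_n$ to all $y\in[0,1]$ by the same recipe, $T_n(\cdot)$ is the quantile-process Bahadur--Kiefer remainder (see \citet{Bahadur66,Kiefer67}).

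Second I would simply invoke the strong approximation of the quantile process from \citet{Csorgo1978}: under its Conditions (3.2) and (3.4) -- smoothness of $F_X$ together with the boundedness of $y(1-y)\,|f_X'(F_X^{-1}(y))|/f_X^2(F_X^{-1}(y))$, which holds here because $\overline F_X\in\mathcal{RV}_{-\a}$ satisfies \eqref{vonmise}, so that this quantity stays bounded by Karamata's theorem -- one obtains the almost sure uniform rate $\sup_{0\le y\le1}|T_n(y)|=O\big(n^{-3/4}(\log\log n)^{-1/4}(\log n)^{-1/2}\big)$, which is exactly \eqref{majsuprest}. The same argument applies verbatim with $(Y,\bnk,V_j,f_Y)$ in place of $(X,\ank,U_j,f_X)$.

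Finally, \eqref{majsupint} follows by combining \eqref{approxint} (written for the second coordinate) with \eqref{majsuprest}: the integral $\int_{\bnk}^{Y(k)}\tfrac nk\overline F_Y(x)\tfrac{dx}{x}$ equals its leading linear term plus the remainder $T_n(p_k)/\bnk$, up to the lower-order quadratic contribution appearing in the derivation of \eqref{approxint}; bounding $|T_n(p_k)|\le\sup_{0\le y\le1}|T_n(y)|$ and inserting \eqref{majsuprest} yields the order $O\big(\tfrac{1}{\bnk}\,n^{-3/4}(\log\log n)^{-1/4}(\log n)^{-1/2}\big)$, and since $\a>0$ forces $\bnk=F_Y^{-1}(1-k/n)\to\infty$, this remainder is in fact of even smaller order.

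The step I expect to be the real obstacle is the second one: securing the \emph{uniform}-in-$y$ Bahadur--Kiefer rate, valid up to the boundary $y\to1$ (that is, for the uppermost order statistics, which is the regime that matters for $X(k)$), together with the precise logarithmic corrections. This is exactly why Conditions (3.2) and (3.4) are imported from \citet{Csorgo1978}; on our side the work then reduces to checking that the standing regular-variation and von Mises assumptions of the paper entail those conditions. The first and third steps are routine Taylor-expansion and Karamata bookkeeping.
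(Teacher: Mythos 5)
Your treatment of \eqref{majsuprest} matches the paper's: both reduce it to Theorem E of \citet{Csorgo1978} under its Conditions (3.2) and (3.4), and your additional remarks on why the regular-variation and von Mises hypotheses make those conditions hold are a useful supplement that the paper omits.

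For \eqref{majsupint}, however, there is a genuine gap. You decompose the integral via \eqref{approxint} as a leading linear term plus $T_n(p_k)/\bnk$, and then bound only the $T_n(p_k)$ contribution. But the linear term $-\frac{1}{n}\sum_{j}\big(\mathds{1}(V_j\le p_k)-p_k\big)\big/\big(\bnk f_Y(\bnk)\big)$ is the dominant one: since $\bnk f_Y(\bnk)\sim \a k/n$ by \eqref{vonmise}, that term is of exact order $k^{-1/2}$ up to iterated-logarithm factors, which is far larger than the asserted $n^{-3/4}(\log\log n)^{-1/4}(\log n)^{-1/2}/\bnk$ (recall $k=o(n)$ and $\bnk\to\infty$). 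So bounding $|T_n(p_k)|$ by $\sup_y|T_n(y)|$ and inserting \eqref{majsuprest} controls only a lower-order piece of your decomposition, and the stated conclusion does not follow from it. The paper's own proof takes a different route: it writes the integral as $\frac{1}{\a}\,\rho_n^Y(p_k)/(\sqrt{n}\,\bnk)$ plus a quadratic remainder, where $\rho_n^Y(p_k)=\sqrt{n}\,(Y(k)-\bnk)f_Y(\bnk)$ is the full generalized quantile process --- i.e.\ it contains both the linear fluctuation and the Bahadur--Kiefer remainder --- and then invokes the strong approximation $\sup_y|\rho_n^Y(y)-u_n^Y(y)|=O(n^{-1/2}\log\log n)$ together with a uniform almost-sure bound on the uniform quantile process $u_n^Y$. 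Whatever one makes of the rates asserted there (the claimed order is in visible tension with the $k^{-1/2}$-order fluctuations of $Y(k)/\bnk$ that the paper exploits elsewhere to get $\Omega\ne\Sigma$), the structural point is that the object which must be bounded uniformly is the whole quantile process, not merely the Bahadur--Kiefer remainder $T_n$; your argument controls only the latter.
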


\begin{proof}[Proof of Lemma \ref{orders}]
Eq.\eqref{majsuprest} follows directly from Th.E in \citet{Csorgo1978}.\\

In order to prove Eq. \eqref{majsupint}, we start recalling some fact about uniform approximation of the generalized quantile process. We set 
\begin{align}
\label{genquant}
\rho^Y_n(p_i) & = \sqrt{n}\Big( Y(i) - b(n/i)\Big)f_Y\Big(b\big(n/i\big)\Big)\\
\label{unifquant}
u_n^Y(p_i) & = \sqrt{n}\left( V(i) - p_i \right)
\end{align}

It is known from \citet{Csorgo1978} that under specific conditions satisfied by regularly varying survival functions, we have

\begin{align}
\label{approxdiff}
\sup_{0\leq y\leq 1}\left\vert\rho^Y_n(y)-u^Y_n(y)\right\vert & = O\left(n^{-1/2}\log\log n \right) a.s.\\
\label{approxunif}
\sup_{0\leq y\leq 1}\left\vert u^Y_n(y)\right\vert & = O\left(n^{-1/4}\left(\log\log n \right)^{-1/4}\left(\log n\right)^{-1/2}\right) a.s.,
\end{align}
\textit{cf} \citet[see][cond. (i) to (iv) p.18]{Csorgo2004}.
We deduce for Eq. \eqref{approxdiff} and \eqref{approxunif} that

\begin{equation}
\label{approxgen}
\sup_{0\leq y\leq 1}\left\vert\rho^Y_n(y)\right\vert = O\left(n^{-1/4}\left(\log\log n \right)^{-1/4}\left(\log n\right)^{-1/2}\right) a.s.
\end{equation}

Now, for Eq. \eqref{majsupint}, notice that 

\begin{equation*}
\int_{\bnk}^{Y(k)}\frac{n}{k}\overline{F}_Y\left(x\right)\frac{dx}{x}  = \frac{1}{\a}\frac{\rho_n(p_k)}{\sqrt{n}\bnk} + O\left( \left(\frac{\rho_n(p_k)}{\sqrt{n}\bnk}\right)^2\right)\quad a.s.
\end{equation*}

and conclude by means of Eq. \eqref{approxgen}
\end{proof}

\begin{lem}[\sc{Covariance computation (I)}]
\label{logorder}
$$\E\left[\log X(i) \int_{b(n/k)}^{Y(k)}\frac{n}{k}\overline{F}_X\left(x\right)\frac{dx}{x}\right] = M_n(i) + R_{n,1}(k)+R_{n,2}(k),$$
where
\begin{align*}
M_{n}(i) & =\frac{\P\Big(X>a\big(n/i\big),Y>\bnk\Big)-\overline{p}_i\overline{p}_k}{n a(n/i) f_X\Big(a\big(n/i\big)\Big) \bnk f_Y\Big(b\big(n/k\big)\Big)},\\
R_{n,1}(k) & = O\left(\frac{n^{-3/2}\left(\log\log n\right)^{-1/2}\left(\log n\right)^{-1}}{\ank\bnk}\right),\\
R_{n,2}(k) & = O\left(c\cdot\bnk^{-1}n^{-3/4}\left(\log\log n\right)^{-1/4}\left(\log n\right)^{-1/2}\right).
\end{align*}

\end{lem}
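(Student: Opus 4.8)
The plan is to feed the two Bahadur--Kiefer representations of Lemma~\ref{linearisation} into the product $\log X(i)\cdot\int_{\bnk}^{Y(k)}\frac{n}{k}\overline{F}_{Y}(x)\frac{dx}{x}$ and to evaluate the resulting expectation term by term. By \eqref{BKlog} write $\log X(i)=\log a(n/i)+C_i+D_i$, the \emph{linear} part being $C_i=-\frac{1}{n}\sum_{j=1}^{n}\frac{\mathds{1}(U_j\le p_i)-p_i}{a(n/i)\,f_X(a(n/i))}$ and the \emph{remainder} $D_i=O\!\big(T_n(p_i)/a(n/i)\big)$; by the $Y$--analogue of \eqref{approxint} write $\int_{\bnk}^{Y(k)}\frac{n}{k}\overline{F}_{Y}(x)\frac{dx}{x}=A_n+B_n$ with $A_n=-\frac{1}{n}\sum_{m=1}^{n}\frac{\mathds{1}(V_m\le p_k)-p_k}{\bnk\,f_Y(b(n/k))}$ and $B_n=T_n(p_k)/\bnk$ (the densities $f_X=F_X'$, $f_Y=F_Y'$ exist by the von Mises condition \eqref{vonmise}, which is in force in the proof of Theorem~\ref{HillGaussDet}). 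Expanding, $\E[\log X(i)\cdot(A_n+B_n)]$ splits into six pieces: $\log a(n/i)\,\E[A_n]$, $\log a(n/i)\,\E[B_n]$, $\E[C_iA_n]$, $\E[C_iB_n]$, $\E[D_iA_n]$ and $\E[D_iB_n]$.

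First I would dispose of the trivial piece and isolate the main one. Since $F_X$ and $F_Y$ are continuous, $U_j$ and $V_j$ are uniform, hence $A_n$ is centred and $\log a(n/i)\,\E[A_n]=0$. For $\E[C_iA_n]$, the pairs $(U_j,V_j)_{1\le j\le n}$ being i.i.d., all off--diagonal contributions ($j\ne m$) vanish by independence and centring, leaving $\E[C_iA_n]=\frac{1}{n}\,\frac{\C[\mathds{1}(U_1\le p_i),\mathds{1}(V_1\le p_k)]}{a(n/i)\,f_X(a(n/i))\,\bnk\,f_Y(b(n/k))}$. An inclusion--exclusion identity on complements gives $\C[\mathds{1}(U_1\le p_i),\mathds{1}(V_1\le p_k)]=\P(U_1>p_i,V_1>p_k)-\overline{p}_i\overline{p}_k$, and by the definitions of $a(n/i)$ and $\bnk$ this equals $\P(X>a(n/i),Y>\bnk)-\overline{p}_i\overline{p}_k$ up to a boundary correction of size $O(1/n)$, negligible below any of the remainder scales; thus this piece equals exactly $M_n(i)$.

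It remains to bound the four pieces containing a remainder; this is where Lemma~\ref{orders} enters. Put $\tau_n:=n^{-3/4}(\log\log n)^{-1/4}(\log n)^{-1/2}$, so that \eqref{majsuprest} reads $\sup_{0\le y\le1}|T_n(y)|=O(\tau_n)$ a.s.; I would upgrade this to $\|\sup_y|T_n(y)|\|_2=O(\tau_n)$ --- the moment control underpinning conditions (3.2)--(3.4) of \citet{Csorgo1978} --- which yields $\|B_n\|_2\le C\tau_n/\bnk$ and $\|D_i\|_2\le C\tau_n/a(n/i)$. For the deterministic$\times$remainder piece, $\log a(n/i)\,\E[B_n]=\frac{\log a(n/i)}{\bnk}\,\E[T_n(p_k)]=O\!\big(c\,\bnk^{-1}\tau_n\big)=R_{n,2}(k)$, the slowly varying prefactor $\log a(n/i)=O(\log(n/i))$ being absorbed into $c$. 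Elementary binomial variance estimates together with the von Mises relation $a(s)f_X(a(s))\sim\a\,\overline{F}_X(a(s))$ give $\|C_i\|_2=O(i^{-1/2})$ and $\|A_n\|_2=O(k^{-1/2})$; hence, by Cauchy--Schwarz, $|\E[C_iB_n]|\le\|C_i\|_2\|B_n\|_2=O(\tau_n/\bnk)$ and $|\E[D_iA_n]|\le\|D_i\|_2\|A_n\|_2=O\!\big(\tau_n/(a(n/i)\sqrt{k})\big)$, both $O(R_{n,2}(k))$ since $a(n/i)\ge\ank$ for $i\le k$ and $\bnk/\ank$ is of slowly varying order. Finally $|\E[D_iB_n]|\le\|D_i\|_2\|B_n\|_2=O\!\big(\tau_n^2/(a(n/i)\bnk)\big)=O\!\big(\tau_n^2/(\ank\bnk)\big)=R_{n,1}(k)$. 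Summing the six contributions gives the asserted identity.

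The step I expect to be the real obstacle is precisely the upgrade from the \emph{almost sure} Bahadur--Kiefer rate \eqref{majsuprest} to the $L^1$/$L^2$ statements used above: here $T_n(p_k)$ appears inside an expectation and is multiplied by other random quantities, so a genuine moment bound on $\sup_y|T_n(y)|$ is required, and this is where the Cs\"{o}rg\H{o}-type regularity conditions really do the work (a truncation argument may be needed to make this rigorous). By comparison, the vanishing of the off--diagonal terms, the inclusion--exclusion identification of $M_n(i)$, and the order bookkeeping for the four remainder pieces are routine once that bound is available.
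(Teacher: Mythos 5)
Your proof is correct and follows essentially the same route as the paper: the Bahadur--Kiefer linearisations of Lemma~\ref{linearisation}, identification of the main term $M_n(i)$ as the (diagonal) covariance of the indicators $\mathds{1}(U_1\le p_i)$ and $\mathds{1}(V_1\le p_k)$, and control of the remainder cross-terms via Lemma~\ref{orders} and Cauchy--Schwarz. Your six-piece bookkeeping is in fact slightly more careful than the paper's three-way grouping (which lumps all of $\log X(i)\cdot T_n(p_k)/\bnk$ into $R_{n,2}$ via $\E[\log^2X(i)]<\infty$), and the obstacle you flag --- upgrading the almost-sure rate \eqref{majsuprest} to an $L^2$ bound on $T_n$ --- is precisely the step the paper also uses (writing $\E[T_n(p_k)^2]$) without further justification.
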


\begin{proof}[Proof of Lemma \ref{logorder}]

One may write
\begin{multline*}
\E\left[\log X(i)\int_{b(n/k)}^{Y(k)}\frac{n}{k}\overline{F}_X\left(x\right)\frac{dx}{x}\right] =\\
\E\left[\left(\log a(n/i)-\frac{1}{n}\sum_{j=1}^{n}\frac{\mathds{1}\left(U_j\leq p_i\right)-p_i}{a(n/i)f_X\Big(a\big(n/i\big)\Big)}\right)\left(-\frac{1}{n}\sum_{j=1}^{n}\frac{\mathds{1}\left(V_j\leq p_k\right)-p_k}{\bnk f_Y\Big(b\big(n/k\big)\Big)}\right)\right] \\
+\mathbb{E}\left[O\left(\frac{T_n(p_i)}{a(n/i)}\int_{b(n/k)}^{Y(k)}\frac{n}{k}\overline{F}_Y\left(x\right)\frac{dx}{x}\right)\right]+\mathbb{E}\left[ \log X(i) \frac{T_n(p_k)}{\bnk}\right]\\
\shoveleft{=M_n(i) + R_{n,1}(k) + R_{n,2}(k)}.\\
\end{multline*}
We have
\begin{align*}
M_n(i)  & = \E\left[\left(\frac{1}{n^2}\sum_{j=1}^{n}\frac{\Big(\mathds{1}\big(U_j\leq p_i\big)-p_i\Big)\Big(\mathds{1}\big(V_j\leq p_k\big)-p_k\Big)}{n a(n/i) f_X\Big(a\big(n/i\big)\Big) \bnk f_Y\Big(b\big(n/k\big)\Big)}\right)\right]\\
        & = \frac{\P\Big(X>a\big(n/i\big),Y>\bnk\Big)-\overline{p}_i\overline{p}_k}{n a(n/i) f_X\Big(a\big(n/i\big)\Big) \bnk f_Y\Big(b\big(n/k\big)\Big)}.
\end{align*}

By Lemma \ref{orders}, we have 
\begin{equation*}
R_{n,1}(k)  = O\left(\frac{n^{-3/2}\left(\log\log n\right)^{-1/2}\left(\log n\right)^{-1}}{\ank\bnk}\right).
\end{equation*}

In addition, by virtue of Cauchy-Schwarz inequality, we have
\begin{align*}
R_{n,2}(i) & = \mathbb{E}\left[ \log X(i) \frac{T_n(p_k)}{\bnk}\right]\\
           & \leq \bnk^{-1}\sqrt{\mathbb{E}\left[ \log^2 X(i) \right]\mathbb{E}\left[T_n(p_k)^2\right]}.\\
\end{align*}

For any positive value of $\a$, $\mathbb{E}\left[ \log^2 X(i) \right]<+\infty$. Hence, we have

\begin{equation*}
R_{n,2}(i) = O\left(\bnk^{-1}n^{-3/4}\left(\log\log n\right)^{-1/4}\left(\log n\right)^{-1/2}\right). \\
\end{equation*}

\end{proof}

\begin{lem}[\sc{Covariance computation (II)}]
\label{limrap}
The following asymptotic relationships hold
\begin{equation*}
\forall i= 1\dots k, \qquad M_n(i)  \sim \frac{1}{\a^2}\frac{n}{ik}\P\Big(X>a\big(n/i\big),Y>\bnk\Big).
\end{equation*}
In the particular case $i=k,$ we have
$$M_n(k) \sim \frac{\nu(1,1)}{k\a^2}.$$

\end{lem}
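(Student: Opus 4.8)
The plan is to start from the exact identity for $M_n(i)$ established in Lemma~\ref{logorder},
$$M_n(i)=\frac{\P\big(X>a(n/i),\,Y>\bnk\big)-\overline{p}_i\overline{p}_k}{n\,a(n/i)f_X\big(a(n/i)\big)\,\bnk f_Y\big(b(n/k)\big)},\qquad \overline{p}_i=\tfrac{i-1}{n},\ \overline{p}_k=\tfrac{k-1}{n},$$
and to determine the exact order of the numerator and of the denominator separately, the asymptotic equivalence in the statement being then obtained by taking the ratio.

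For the denominator, I would use the von Mises condition \eqref{vonmise}, which says precisely that $s f_X(s)/\overline{F}_X(s)\to\a$ and $s f_Y(s)/\overline{F}_Y(s)\to\a$ as $s\to\infty$. Since $a(n/i)\to\infty$ and $b(n/k)\to\infty$ (recall $i\le k=o(n)$), this gives $a(n/i)f_X(a(n/i))\sim\a\,\overline{F}_X(a(n/i))$ and $\bnk f_Y(b(n/k))\sim\a\,\overline{F}_Y(b(n/k))$. Combining this with the quantile identity $\overline{F}_X(a(n/i))\sim i/n$ and $\overline{F}_Y(b(n/k))\sim k/n$ (immediate from $a^{(j)}(x)=F_j^{-1}(1-1/x)$, up to the harmless $O(1/n)$ discrepancy with the definition of $\overline{p}_i$), the denominator is seen to be asymptotically equivalent to $n\cdot\a\,\tfrac{i}{n}\cdot\a\,\tfrac{k}{n}=\a^{2}\,ik/n$.

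For the numerator it suffices to check that $\overline{p}_i\overline{p}_k\sim ik/n^{2}$ is negligible in front of the joint tail $\P(X>a(n/i),Y>\bnk)$. For $i\le k$ we have $b(n/i)\ge b(n/k)$, hence
$$\P\big(X>a(n/i),\,Y>\bnk\big)\ \ge\ \P\big(X>a(n/i),\,Y>b(n/i)\big)\ \sim\ \frac{i}{n}\,\nu(1,1)$$
by \eqref{exponent} applied with $k$ replaced by $i$ and $\x=(1,1)$; since $k/n\to0$, this forces $\overline{p}_i\overline{p}_k=o\big(\P(X>a(n/i),Y>\bnk)\big)$ as soon as $\nu(1,1)>0$, so the numerator is $\sim\P(X>a(n/i),Y>\bnk)$. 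Dividing by the denominator gives $M_n(i)\sim\frac{n}{\a^{2}ik}\,\P(X>a(n/i),Y>\bnk)$, which is the first claim. For $i=k$ one invokes the stronger statement $\P(X>a(n/k),Y>b(n/k))\sim\frac{k}{n}\,\nu(1,1)$ (again \eqref{exponent}) which, together with the denominator equivalent $\a^{2}k^{2}/n$ and the negligibility of $\overline{p}_k^{2}$, yields $M_n(k)\sim\frac{n}{\a^{2}k^{2}}\cdot\frac{k}{n}\,\nu(1,1)=\frac{\nu(1,1)}{\a^{2}k}$.

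\textbf{The main obstacle} is the control of the denominator: the von Mises hypothesis only gives the pointwise limit $sf_X(s)/\overline{F}_X(s)\to\a$, so some care is needed to guarantee that the equivalences $a(n/i)f_X(a(n/i))\sim\a\,\overline{F}_X(a(n/i))$ remain valid uniformly enough over the whole range $1\le i\le k$ when $i=i(n)$ is itself allowed to vary with $n$ — this works because $a(n/i)\to\infty$ no matter how $i\le k=o(n)$ behaves, but it has to be spelled out. A lesser technical point is the lower bound on the joint tail probability used to discard $\overline{p}_i\overline{p}_k$, which relies on the monotonicity $b(n/i)\ge b(n/k)$ and on the non-degeneracy $\nu(1,1)>0$.
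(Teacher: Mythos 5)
Your proposal is correct and follows essentially the same route as the paper: the von Mises condition \eqref{vonmise} combined with $\overline{F}_X(a(n/i))\sim i/n$ and $\overline{F}_Y(b(n/k))\sim k/n$ gives $n\,a(n/i)f_X(a(n/i))\,b(n/k)f_Y(b(n/k))\sim\a^2 ik/n$, and the case $i=k$ then follows from the vague convergence \eqref{exponent}. You are in fact slightly more careful than the paper, which silently discards the $\overline{p}_i\overline{p}_k$ term, whereas you justify its negligibility via the monotonicity $b(n/i)\ge b(n/k)$ and the non-degeneracy $\nu(1,1)>0$.
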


\begin{proof}[Proof of Lemma \ref{limrap}]

Von Mise's Conditions \eqref{vonmise} give
\begin{equation}
\label{vonmiseequi}
a\big(n/i\big) f_X\Big(a\big(n/i\big)\Big) \bnk f_Y\Big(\bnk\Big) \sim  \a^2\frac{ik}{n^2}.
\end{equation}
With Eq. \eqref{vonmiseequi}, this yields
\begin{equation*}
\frac{\P\Big(X>a\big(n/i\big),Y>\bnk\Big)-\overline{p}_k\overline{p}_k}{n a\big(n/i\big) f_X\Big(a\big(n/i\big)\Big) \bnk f_X\Big(\bnk\Big)} \sim  \frac{1}{\a^2}\frac{n}{ik}\P\Big(X>a\big(n/i\big),Y>\bnk\Big).
\end{equation*}

\end{proof}

\begin{lem}[\sc{Covariance computation (III)}]
\label{riemansum}
The following convergence holds
$$\lim_{n\rightarrow\infty} \sum_{i=1}^{k}\frac{n}{ki}\P\Big(X>a\big(n/i\big),Y>b(n/k)\Big)  = \int_{1}^{\infty}\frac{\nu(x,1)}{x}dx.$$
\end{lem}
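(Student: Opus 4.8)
The plan is to recognise the sum as a Riemann sum for $\int_1^\infty \nu(x,1)\,dx/x$, the underlying change of variable being the one induced by the regular variation of the quantile function $a$. First I would put the general term in a scale‑invariant form: since $i\le k$ forces $n/i\ge n/k$, hence $a(n/i)\ge a(n/k)$, I set $x_{i,n}:=a(n/i)/a(n/k)\ge 1$, so that
$$\sum_{i=1}^{k}\frac{n}{ki}\,\P\big(X>a(n/i),\,Y>b(n/k)\big)=\sum_{i=1}^{k}\frac{1}{i}\cdot\frac{n}{k}\,\P\big(X>a(n/k)\,x_{i,n},\,Y>b(n/k)\big).$$

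Second, I would replace the probability in the right‑hand side by $\nu(x_{i,n},1)$. This is exactly where Condition \eqref{multi2order} enters: it gives $\sup_{x\ge 1}\big|\tfrac nk\,\overline F\big(a(n/k)x,\,b(n/k)\big)-\nu(x,1)\big|=o(1/\log k)$, and since $x_{i,n}\ge1$ for every $i$,
$$\Bigg|\sum_{i=1}^{k}\frac{1}{i}\cdot\frac{n}{k}\P(\cdots)-\sum_{i=1}^{k}\frac{\nu(x_{i,n},1)}{i}\Bigg|\le o\!\left(\frac{1}{\log k}\right)\sum_{i=1}^{k}\frac{1}{i}=o(1),$$
because $\sum_{i\le k}1/i=\log k+O(1)$. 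So it is enough to analyse $\sum_{i\le k}\nu(x_{i,n},1)/i$.

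Third, I would recognise $\sum_{i\le k}\nu(x_{i,n},1)/i$ as a Riemann sum. By the uniform convergence theorem for regularly varying functions, $x_{i,n}=a\big((n/k)(k/i)\big)/a(n/k)$ is uniformly close to $(k/i)^{1/\alpha}$ on ranges of $k/i$ bounded away from $0$ and $\infty$, while for $k/i$ large a Potter bound controls it; together with the domination $0\le\nu(x,1)\le\nu_{1,2}\big((x,\infty]\times(0,\infty]\big)=x^{-\alpha}$ (which makes the contribution of the small indices $i$ uniformly negligible and guarantees the target integral converges), this reduces the quantity to a genuine Riemann sum. The change of variable $x=(k/i)^{1/\alpha}$, legitimate precisely because $a$ is regularly varying with index $1/\alpha$ so that $i\mapsto a(n/i)/a(n/k)$ behaves like $i\mapsto(k/i)^{1/\alpha}$, then identifies the limit with $\int_1^\infty \nu(x,1)\,dx/x$.

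The main obstacle is the uniformity demanded in the second step: the vague convergence \eqref{exponent} is only pointwise in its arguments, whereas here the first argument $x_{i,n}$ sweeps the unbounded, $n$‑dependent set $\{a(n/i)/a(n/k):1\le i\le k\}$, and the weights $1/i$ sum to $\sim\log k$. The rate $o(1/\log k)$ imposed in \eqref{multi2order} is calibrated exactly to absorb this logarithmic amplification, and the replacement of $x_{i,n}$ by $(k/i)^{1/\alpha}$ in the third step genuinely requires the uniform convergence theorem and Potter bounds rather than bare regular variation. Once these uniform estimates are secured, the remainder is the routine convergence of a Riemann sum of a continuous, monotone, dominated integrand.
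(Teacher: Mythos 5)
Your argument follows the paper's proof essentially step for step: the same scale-invariant rewriting with $x_{i,n}=a(n/i)/a(n/k)$, the same use of the $o(1/\log k)$ rate in Condition \eqref{multi2order} to absorb the $\log k$ coming from the harmonic weights $\sum_{i\le k}1/i$, and the same Potter-bound/Riemann-sum identification of the limit of $\sum_{i\le k}\nu(x_{i,n},1)/i$. One caveat worth checking when you write out the last step: the substitution $x=(k/i)^{1/\alpha}$ carries a Jacobian factor $\alpha$, and the paper's own computation in fact lands on $\alpha\int_{1}^{\infty}\nu(x,1)x^{-1}dx$ rather than the constant displayed in the lemma statement, so the normalisation of the limit deserves a second look.
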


\begin{proof}
Write, for any $i=1\dots k$
\begin{align*}
\frac{n}{ki}\P\Big(X>a\big(n/i\big),Y>b(n/k)\Big)  & = \frac{n}{ki}\P\left(X>\an2 \frac{a\left(\frac{n}{i}\right)}{\an2 },Y>b\left(\frac{n}{k}\right)\right)\\
  & = \frac{n}{ki}\overline{F}\left( \an2 \frac{a\left(\frac{n}{i}\right)}{\an2 }
, \bn2 \right).
\end{align*}
We have
\begin{multline*}
\sup_{x>1}\left\vert\sum_{i=1}^{k}\frac{\frac{n}{k}\overline{F}\left( \an2 x, \bn2 \right)}{i}- \frac{\nu\left(x,1\right)}{i} \right\vert \\
\leq \sup_{x>1}\sum_{i=1}^{k}\frac{1}{i} \left\vert\frac{n}{k}\overline{F}\left(\an2 x,\bn2 \right) - \nu\left(x,1\right) \right\vert \\
\leq \sum_{i=1}^{k}\frac{1}{i} \sup_{x>1}\left\vert\frac{n}{k}\overline{F}\left(\an2 x,\bn2 \right) - \nu\left(x,1\right) \right\vert \\
\sim \log k .\sup_{x>1}\left\vert\frac{n}{k}\overline{F}\left(\an2 x,\bn2 \right) - \nu\left(x,1\right) \right\vert\\
\hspace{-5cm}\sim o\left(1\right)\qquad \text{by Condition \eqref{multi2order}}.
\end{multline*}

Then,
\begin{multline*}
\left\vert\sum_{i=1}^{k}\frac{1}{i}\left(\frac{n}{k}\overline{F}\left( \an2 \frac{a\left(\frac{n}{i}\right)}{\an2 }
, \bn2 \right)- \nu\left(\frac{a\left(\frac{n}{i}\right)}{\an2 },1\right)\right) \right\vert \\
 \leq {}\sup_{x>1}\sum_{i=1}^{k}\frac{1}{i} \left\vert\frac{n}{k}\overline{F}\left(\an2 x,\bn2 \right) - \nu\left(x,1\right) \right\vert \\
\end{multline*}
Hence 
$$\left\vert\sum_{i=1}^{k}\frac{1}{i}\frac{n}{k}\overline{F}\left( \an2 \frac{a\left(\frac{n}{i}\right)}{\an2 }
, \bn2 \right)- \sum_{i=1}^{k}\frac{1}{i}\nu\left(\frac{a\left(\frac{n}{i}\right)}{\an2 },1\right)\right\vert\xrightarrow[n\rightarrow\infty]{} 0.$$

In addition, using Potter's Bound, for any $\e>0$ if $n$ is large enough we have

\begin{equation*}
(1-\e)\left(\frac{k}{i}\right)^{1/\a-\e} \leq  \frac{a\left(\frac{n}{i}\right)}{\an2 } \leq (1+\e)\left(\frac{k}{i}\right)^{1/\a+\e}
\end{equation*}

\begin{equation*}
\nu\left((1-\e)\left(\frac{k}{i}\right)^{1/\a-\e},1\right) \leq  \nu\left(\frac{a\left(\frac{n}{i}\right)}{\an2 },1\right) \leq  \nu\left( (1+\e)\left(\frac{k}{i}\right)^{1/\a+\e},1\right)
\end{equation*}

\begin{multline}
\label{riemBond}
\frac{1}{k}\sum_{i=1}^{k}\frac{k}{i}\nu\left((1-\e)\left(\frac{k}{i}\right)^{1/\a-\e},1\right) \leq \frac{1}{k}\sum_{i=1}^{k}\frac{k}{i}\nu\left(\frac{a\left(\frac{n}{i}\right)}{\an2 },1\right) \\
\leq  \frac{1}{k}\sum_{i=1}^{k}\frac{k}{i}\nu\left( (1+\e)\left(\frac{k}{i}\right)^{1/\a+\e},1\right). 
\end{multline}

As $n\rightarrow \infty$ and $\e\rightarrow 0$,	the bounds of Eq. \eqref{riemBond} converges towards \\
$\int_{0}^{1}\nu\left(x^{-1/\a},1\right)x^{-1}dx$. We deduce from the above that 

\begin{equation*}
\frac{1}{k}\sum_{i=1}^{k}\frac{k}{i}\nu\left(\frac{a\left(\frac{n}{i}\right)}{\an2 },1\right) \xrightarrow[n\rightarrow\infty]{}\int_{0}^{1}\frac{\nu\left(x^{-1/\a},1\right)}{x}dx=\a\int_{1}^{\infty}\frac{\nu\left(x,1\right)}{x}dx
\end{equation*}

Finally, we obtain the desired convergence 

\begin{equation*}
\sum_{i=1}^{k}\frac{1}{i}\frac{n}{k}\overline{F}\left( \an2 \frac{a\left(\frac{n}{i}\right)}{\an2 }
, \bn2 \right)  \xrightarrow[n\rightarrow\infty]{} \a\int_{1}^{\infty}\frac{\nu\left(x,1\right)}{x}dx.
\end{equation*}
\end{proof}

\begin{proof}[Proof of Lemma \ref{bias1}]
We have 
\begin{multline*}
k\E\left[\int_{a(n/k)}^{X(k)}\frac{n}{k}\overline{F}_X\left(x\right)\frac{dx}{x}\int_{b(n/k)}^{Y(k)}\frac{n}{k}\overline{F}_Y\left(x\right)\frac{dx}{x}\right] =\\
\end{multline*}
\vspace{-1.5cm}
\begin{multline*}
\shoveleft{k\E\left[\left(-\frac{1}{n}\sum_{j=1}^{n}\frac{\mathds{1}\left(U_j\leq p_k\right)-p_k}{\ank f_X\Big(\ank\Big)} + \frac{T_n(p_k)}{\ank}\right)\right.}\\
\left. \times{}\left(-\frac{1}{n}\sum_{j=1}^{n}\frac{\mathds{1}\left(V_j\leq p_k\right)-p_k}{\ank f_Y\Big(\bnk\Big)} + \frac{T_n(p_k)}{\bnk}\right)\right]\\
\end{multline*}
\vspace{-1.5cm}
\begin{multline*}
=k\E\left[\frac{1}{n^2}\sum_{j=1}^{n}\frac{\Big(\mathds{1}\left(U_j\leq p_k\right)-p_k\Big)\Big(\mathds{1}\left(V_j\leq p_k\right)-p_k\Big)}{\ank f_X\Big(\ank\Big) \bnk f_X\Big(\bnk\Big)}\right]\\
+{}\E\left[\frac{T_n(p_k)}{\ank}\int_{b(n/k)}^{Y(k)}\frac{n}{k}\overline{F}_Y\left(x\right)\frac{dx}{x} \right] +\E\left[\frac{T_n(p_k)}{\bnk}\int_{a(n/k)}^{X(k)}\frac{n}{k}\overline{F}_X\left(x\right)\frac{dx}{x} \right]\\
\end{multline*}
\vspace{-1.5cm}
\begin{multline*}
={} k\frac{\P\Big(X>\ank,Y>b(n/k)\Big)-\overline{p}_k\overline{p}_k}{n \bnk f_X\Big(b\big(n/k\big)\Big) \ank f_X\Big(\ank\Big)} \\
+{} O\left(k\frac{n^{-3/2}\left(\log\log n\right)^{-1/2}\left(\log n\right)^{-1}}{\ank\bnk}\right).
\end{multline*}
The result follows from Lemma \ref{limrap}.

\end{proof}

\begin{proof}[Proof of Lemma \ref{bias2}]
Using the notations above, we can write  

\begin{multline*}
\lim_{k\rightarrow\infty}\E\left[k\left(H_{k,n}^{X}-\frac{1}{\alpha}\right)\int_{b(n/k)}^{Y(k)}\frac{n}{k}\overline{F}_Y\left(x\right)\frac{dx}{x}\right] \\
\shoveleft{= \lim_{k\rightarrow\infty}\frac{k}{\a}\E\left[\left(\frac{1}{k}\sum_{i=1}^{k}\log{\frac{X(i)}{X(k)}}-\frac{1}{\a}\right)\int_{b(n/k)}^{Y(k)}\frac{n}{k}\overline{F}_Y\left(x\right)\frac{dx}{x}\right]}\\
\shoveleft{ = \lim_{k\rightarrow\infty}\frac{k}{\a}\E\left[\left(\frac{1}{k}\sum_{i=1}^{k}\log X(i)-\log X(k) - \frac{1}{\a}\right)\int_{b(n/k)}^{Y(k)}\frac{n}{k}\overline{F}_Y\left(x\right)\frac{dx}{x}\right]}\\
\shoveleft{=\lim_{k\rightarrow\infty} \sum_{i=1}^{k}M_n(i)-M_n(k) + \lim_{k\rightarrow\infty} \sum_{i=1}^{k}R_{n,1}(i)+R_{n,2}(i)-R_{n,1}(k)-R_{n,2}(k)}\\
\shoveleft{= \lim_{k\rightarrow\infty} \sum_{i=1}^{k}M_n(i)-M_n(k)\qquad\text{by Lemma \ref{orders}}.}\\
\shoveleft{=\lim_{n\rightarrow\infty}\frac{1}{\a^2} \sum_{i=1}^{k}\frac{n}{ki}\P\Big(X>a\big(n/i\big),Y>b(n/k)\Big)- \frac{\nu(1,1)}{\a^2}\qquad\text{by Lemma \ref{limrap}}.}\\
= \frac{1}{\a}\int_{1}^{\infty}\frac{\nu\left(x,1\right)}{x}dx - \frac{\nu(1,1)}{\alpha^2}\qquad\text{by Lemma \ref{riemansum}}.
\end{multline*}
\end{proof}

\begin{lem}
\label{simplification}
$$\int_{1}^{\infty}\int_{1}^{\infty}\frac{\nu(x,y)}{xy}dxdy - \frac{1}{\a}\int_{1}^{\infty}\frac{\nu\left(x,1\right)}{x}dx - \frac{1}{\a}\int_{1}^{\infty}\frac{\nu\left(1,y\right)}{y}dy.$$
\end{lem}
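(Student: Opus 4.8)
The plan is to show that the displayed quantity vanishes, i.e.\ that
$$\int_{1}^{\infty}\int_{1}^{\infty}\frac{\nu(x,y)}{xy}\,dx\,dy \;=\; \frac{1}{\a}\int_{1}^{\infty}\frac{\nu(x,1)}{x}\,dx + \frac{1}{\a}\int_{1}^{\infty}\frac{\nu(1,y)}{y}\,dy .$$
Together with Lemmas~\ref{bias1} and~\ref{bias2} (and the symmetric version of the latter obtained by swapping $X$ and $Y$, which replaces $\nu(x,1)$ by $\nu(1,y)$), this collapses the three covariance contributions in Eq.~\eqref{horrorExpectation} and produces the off-diagonal entry $\Omega_{i,j}=\nu_{i,j}(1,1)/\a^{2}$ announced in Eq.~\eqref{HillDetTh}. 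The only structural ingredient is the homogeneity of the exponent measure, $\nu(tC)=t^{-\a}\nu(C)$, equivalently $\nu(tx,ty)=t^{-\a}\nu(x,y)$ for $t>0$; the elementary bounds $\nu(x,y)\le\nu(1,1)<\infty$ on $(1,\infty)^{2}$ and $\nu(x,1)\le\nu(x,0)=x^{-\a}$, $\nu(1,y)\le\nu(0,y)=y^{-\a}$ will serve only to ensure all integrals are finite, so that Fubini and the change of variables below are legitimate.

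First I would split the region $(1,\infty)^{2}$ along the diagonal (Lebesgue-null) into $\{x\ge y>1\}$ and $\{y\ge x>1\}$. On the first piece, fixing $y>1$ and setting $r=x/y$ (so $x=ry$, $dx=y\,dr$, $r\ge1$), homogeneity gives $\nu(ry,y)=y^{-\a}\nu(r,1)$, whence
$$\int\!\!\int_{x\ge y>1}\frac{\nu(x,y)}{xy}\,dx\,dy=\Big(\int_{1}^{\infty}y^{-\a-1}\,dy\Big)\Big(\int_{1}^{\infty}\frac{\nu(r,1)}{r}\,dr\Big)=\frac{1}{\a}\int_{1}^{\infty}\frac{\nu(r,1)}{r}\,dr .$$
The symmetric computation on $\{y\ge x>1\}$ (change of variable $r=y/x$) gives $\tfrac1\a\int_{1}^{\infty}\nu(1,s)s^{-1}\,ds$, and adding the two halves yields the identity. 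An equivalent, perhaps cleaner presentation uses the substitution $x=e^{u}$, $y=e^{v}$: writing $g(u,v)=\nu(e^{u},e^{v})$ one has $g(u+w,v+w)=e^{-\a w}g(u,v)$, so $g(u,v)$ equals $e^{-\a v}g(u-v,0)$ when $u\ge v$ and $e^{-\a u}g(0,v-u)$ when $v\ge u$; integrating out $\min(u,v)$ by Fubini produces the single factor $\int_{0}^{\infty}e^{-\a w}\,dw=1/\a$ in front of each boundary integral $\int_{0}^{\infty}g(w,0)\,dw$ and $\int_{0}^{\infty}g(0,w)\,dw$.

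The computation is routine and I do not expect a genuine obstacle: once homogeneity is invoked in the right variables the identity is essentially forced. The points needing care are the bookkeeping of the Jacobian against the $1/(xy)$ weight, which must combine to the clean $y^{-\a-1}$, and the a priori verification that the three integrals converge (via $\nu(x,y)\le\nu(1,1)$ and $\nu(x,1)\le x^{-\a}$), so that the diagonal splitting and the interchange of integrals are valid. In short, this lemma is the algebraic glue that, fed with Lemmas~\ref{bias1} and~\ref{bias2}, turns the unwieldy limit of Eq.~\eqref{horrorExpectation} into the compact covariance $\nu_{i,j}(1,1)/\a^{2}$.
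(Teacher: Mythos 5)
Your argument is correct and is essentially the paper's own proof: the paper states exactly the two identities $\frac{1}{\a}\int_{1}^{\infty}\nu(x,1)x^{-1}dx=\int_{1}^{\infty}\int_{y}^{\infty}\nu(x,y)(xy)^{-1}dxdy$ and its symmetric counterpart, which are obtained precisely by your diagonal splitting of $(1,\infty)^2$, the substitution $x=ry$, and the homogeneity $\nu(ry,y)=y^{-\a}\nu(r,1)$, and then adds the two pieces. Your version merely makes the change of variables and the integrability checks explicit (note only that the bound $\nu(x,y)\le\nu(1,1)$ alone does not give convergence of the double integral — one should use $\nu(x,y)\le\min(x^{-\a},y^{-\a})$ or simply invoke Tonelli for the nonnegative integrand — but this is cosmetic).
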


\begin{proof}[Proof of Lemma \ref{simplification}]
For $1\leq i\ne j\leq l$, we have

\begin{align*}
\frac{1}{\a}\int_{1}^{\infty}\frac{\nu_{i,j}\left(x,1\right)}{x}dx & = \int_{1}^{\infty}\int_{y}^{\infty}\frac{\nu_{i,j}(x,y)}{xy}dxdy\\
\text{and}\quad \frac{1}{\a}\int_{1}^{\infty}\frac{\nu_{i,j}\left(1,y\right)}{y}dy & = \int_{1}^{\infty}\int_{x}^{\infty}\frac{\nu_{i,j}(x,y)}{xy}dxdy.\\
\end{align*}
Now, just notice that 
$$\int_{1}^{\infty}\int_{1}^{\infty}\frac{\nu_{i,j}(x,y)}{xy}dxdy = \int_{1}^{\infty}\int_{y}^{\infty}\frac{\nu_{i,j}(x,y)}{xy}dxdy + \int_{1}^{\infty}\int_{x}^{\infty}\frac{\nu_{i,j}(x,y)}{xy}dxdy.$$
\end{proof}

\section{Example - Technical details}
We first treat the case $l=2$. We set $F_1 = G$, $F_2 = H$, $a^{(1)}\big(n/k\big)=a_k$ and $a^{(2)}\big(n/k\big)=b_k$
\label{example}

We have $\overline{F}\big(a_k x,b_k y\big) =\overline{G}\left(a_k x\right) + \overline{H}\left(b_k y\right) - 1 +  C_{\beta}\Big(G\left(a_k x\right),H\left(b_k y\right)\Big)$, and 
\begin{multline*}
\frac{n}{k}C_{\beta}\Big(G\left(a_k x\right),H\left(b_k y\right)\Big)  - \frac{n}{k} \\
 = \bigg(\Big(\frac{n}{k}\overline{G}\left(a_k x\right)\Big)^{\beta}+\Big(\frac{n}{k}\overline{H}\left(b_k y\right)\Big)^{\beta}\bigg)^{1/\beta} 
\quad {}+ O\bigg(\Big(\frac{n}{k}\overline{G}\left(a_k x\right)\Big)^{\beta}+\Big(\frac{n}{k}\overline{H}\left(b_k y\right)\Big)^{\beta}\bigg)^{1/\beta}\\
  ={} \bigg(\Big(\frac{n}{k}\overline{G}\left(a_k x\right)-x^{-\a}+x^{-\a}\Big)^{\beta}+\Big(\frac{n}{k}\overline{H}\left(b_k y\right)+y^{-\a}-y^{-\a}\Big)^{\beta}\bigg)^{1/\beta} \\
\quad {}+ O\bigg(\Big(\frac{n}{k}\overline{G}\left(a_k x\right)\Big)^{\beta}+\Big(\frac{n}{k}\overline{H}\left(b_k y\right)\Big)^{\beta}\bigg)^{1/\beta}\\
  ={} \bigg(x^{-\beta\a}\Big(1+\frac{\frac{n}{k}\overline{G}\left(a_k x\right)-x^{-\a}}{x^{-\a}}\Big)^{\beta}+\frac{1}{y^{\a\beta}}\Big(1+\frac{\frac{n}{k}\overline{H}\left(b_k y\right)-y^{-\a}}{y^{-\a}}\Big)^{\beta}\bigg)^{1/\beta} \\
    ={} \left(x^{-\beta\a}+y^{-\beta\a}\right)^{1/\beta} + O\left(\frac{\frac{n}{k}\overline{G}\left(a_k x\right)-x^{-\a}}{x^{-\a}} +\frac{\frac{n}{k}\overline{H}\left(b_k y\right)-y^{-\a}}{y^{-\a}} \right) 
\end{multline*} 

This gives
\begin{small}
\begin{equation*}
\sup_{x,y>1}\left\vert \frac{n}{k}\overline{F}\big(a_k x,b_k y\big)-\nu(x,y)\right\vert  = O\left(   \sup_{x>1}\left\vert \overline{G}\left(a_k x\right) - \frac{1}{x^{\a}} \right\vert +\sup_{y>0}\left\vert \overline{H}\left(b_k y\right) - \frac{1}{x^{\a}} \right\vert  \right).\\
\end{equation*}
\end{small}

Now, in the general case, it can easily be shown that for $x_i > 0,\:i=1,\dots,d$, we have
\begin{equation*}
\nu(x_1,\dots,x_l) = \sum_{k=1}^{l}\sum_{1\leq i_1\ne \dots \ne i_k\leq l}(-1)^{k+1}\left(x_{i_1}^{-\a\beta}+\dots+x_{i_k}^{-\a\beta}\right)^{\beta}.
\end{equation*}

It follows that for any $1\leq i\ne j \leq l$, we have:

$$\nu_{i,j}(x,y) = \left(x^{-\beta\a}+y^{-\beta\a}\right)^{1/\beta}.$$

\section{Extensions to alternative tail index estimation methods}
We now give an insight into the way the BEAR procedure can be generalised to alternative local estimators of the tail index in the case when the Hill estimator does not have a good behaviour.

\label{extension}
Given a sample $X_1,\; \ldots,\; X_n$ in of heavy-tailed variables with the same tail index $\a$, for some $\a>0$, and the related order statistics $X(1)>\dots>X(n)$, \citet{Dekkers89} introduced the moment estimator $M_{k,n}$ defined as
\begin{equation}
\label{moment}
M_{k,n}=H_{k,n}+1-\frac{1}{2}\left(1-\frac{H^2_{k,n}}{L_{k,n}}\right)^{-1},
\end{equation}
where \begin{equation}
\label{Lkn}
L_{k,n}=\frac{1}{k}\sum_{i=1}^{k}\log^2\left(\frac{X(i)}{X(k+1)}\right).
\end{equation}
Note that $L_{n,k}$ is an estimator of $2/\a^2$. \citet{Vries1996} also introduced the ration estimator $J_{k,n}$ defined as 
\begin{equation}
\label{Vries}
J_{k,n} = \frac{L_{k,n}}{2H_{k,n}}
\end{equation}

This estimator was used in \citet{Danielsson2001} to derive the optimal number $k$ of upper order statistics through a bootstrap method.

\paragraph{A Central Limit Theorem for $M_{k,n}$ and $J_{k,n}$}
We use the same notations as those introduced at the beginning of section \ref{MainTh} and consider a $l$-dimensional vector $\mathbf{X}$ of regularly varying margins with index $-\a$.  Following step by step the proofs of the main results of section \ref{MainTh}, one may adapt them to obtain a multivariate Central Limit Theorem for $(M^{(1)}_{k,n},\; \ldots,\; M^{(l)}_{k,n})$ and for $(J^{(1)}_{k,n},\; \ldots,\; J^{(l)}_{k,n})$.  First, replacing integrals $\int_{1}^{\infty}.\frac{dx}{x}$ in Eq.\eqref{hillint} by $2\int_{1}^{\infty}.\log x \frac{dx}{x}$ and then removing the random centring yields the equivalent of Th.\ref{HillGaussDet} for $L_{k,n}$:

\begin{equation}
\label{Lrand}
\sqrt{k}\left(L_{k,n}^{(1)}- \frac{2}{\a^2},\dots, L_{k,n}^{(l)}- \frac{2}{\a^2}\right)\Rightarrow \mathcal{N}\left(0,\Theta\right).
\end{equation}
where $\Theta_{i,j}=\frac{8}{\a^3}\int_{1}^{\infty}\nu(x,1)\frac{dx}{x}+\frac{8}{\a^3}\int_{1}^{\infty}\nu(1,y)\frac{dy}{y}+8\frac{\nu(1,1)}{\a^4}$ and $\Theta_{i,i} = 20/\a^4$.
More generally
\begin{equation}
\label{LHillDet}
\sqrt{k}\left(I_{k,n}^{(1)}-\a_1,\dots, I_{k,n}^{(l)}-\a_l\right)\Rightarrow \mathcal{N}\left(0,\Omega_I\right),
\end{equation}
where, for any $1\leq i\leq l$, $I_{k,n}^{(i)}$ can be either $L_{k,n}^{(i)}$ with $\a_i = 2/\a^2$ or $H_{k,n}^{(i)}$ with $\a_i=1/\a$. Now, notice that $J_{k,n}$ and $L_{k,n}$ are functionals of $H_{k,n}$ and $L_{k,n}$ at first order. Indeed, 
\begin{align*}
J_{k,n}-\frac{1}{\a} & =  - \bigg(H_{k,n}-\frac{1}{\a}\bigg)+\frac{2}{\a}\left(L_{k,n}-\frac{2}{\a^2}\right) \\
                     & {}\quad +  o \left( \left(H_{k,n}-\frac{1}{\a}\right) + \left(L_{k,n}-\frac{2}{\a^2}\right) \right)\quad a.s.\\
M_{k,n}-\frac{1}{\a} & = \left(1-\frac{2}{\a}\right)\left(H_{k,n}-\frac{1}{\a}\right)+\frac{\a^2}{2}\left(L_{k,n}-\frac{2}{\a^2}\right)\\
                     & {}\quad +  o \left( \left(H_{k,n}-\frac{1}{\a}\right) + \left(L_{k,n}-\frac{2}{\a^2}\right) \right)\quad a.s.\\.
\end{align*}

The full generalisation of Theorem \ref{HillGaussDet} follows:

\begin{thm}
Under the assumptions of Theorem \ref{HillGaussDet} and Corollary \ref{corHillGauss}, we have
\begin{multline*}
\E\left[\sqrt{k}\left(M^{X}_{k_1,n}-\frac{1}{\a}\right)\sqrt{k}\left(M^{Y}_{k_2,n}-\frac{1}{\a}\right)\right]   \\
\xrightarrow[k\rightarrow\infty]{}   \int_{c_1^{1/\a}}^{\infty}\nu(x,c_2^{1/\a})\frac{dx}{x} + \int_{c_2^{1/\a}}^{\infty}\nu(c_1^{1/\a},y)\frac{dy}{y} + \frac{(\a-1)^2}{\a^2}\nu(c_1^{1/\a},c_2^{1/\a}).\\
\shoveleft{\E\left[\sqrt{k}\left(J^{X}_{k_1,n}-\frac{1}{\a}\right)\sqrt{k}\left(J^{Y}_{k_2,n}-\frac{1}{\a}\right)\right] }\\
\xrightarrow[k\rightarrow\infty]{}   \frac{1}{\a}\int_{c_1^{1/\a}}^{\infty}\nu(x,c_2^{1/\a})\frac{dx}{x} + \frac{1}{\a}\int_{c_2^{1/\a}}^{\infty}\nu(c_1^{1/\a},y)\frac{dy}{y}. \\
\shoveleft{\E\left[\sqrt{k}\left(H^{X}_{k_1,n}-\frac{1}{\a}\right)\sqrt{k}\left(M^{Y}_{k_2,n}-\frac{1}{\a}\right)\right]}\\
\xrightarrow[k\rightarrow\infty]{}   \int_{c_1^{1/\a}}^{\infty}\nu(x,c_2^{1/\a})\frac{dx}{x} + \frac{1-\a}{\a^2}\nu(c_1^{1/\a},c_2^{1/\a}).\\
\shoveleft{\E\left[\sqrt{k}\left(M^{X}_{k_1,n}-\frac{1}{\a}\right)\sqrt{k}\left(J^{Y}_{k_2,n}-\frac{1}{\a}\right)\right] }\\
\xrightarrow[k\rightarrow\infty]{}   \int_{c_1^{1/\a}}^{\infty}\nu(x,c_2^{1/\a})\frac{dx}{x} + \frac{1}{\a}\int_{c_2^{1/\a}}^{\infty}\nu(c_1^{1/\a},y)\frac{dy}{y}.
\end{multline*}
\begin{align*}
\E\left[\sqrt{k}\left(H^{X}_{k_1,n}-\frac{1}{\a}\right)\sqrt{k}\left(H^{Y}_{k_2,n}-\frac{1}{\a}\right)\right] &   \xrightarrow[k\rightarrow\infty]{}   \frac{\nu(c_1^{1/\a},c_2^{1/\a})}{\a^2}.\\
\E\left[\sqrt{k}\left(H^{X}_{k_1,n}-\frac{1}{\a}\right)\sqrt{k}\left(J^{Y}_{k_2,n}-\frac{1}{\a}\right)\right] &   \xrightarrow[k\rightarrow\infty]{}   \frac{1}{\a}\int_{c_1^{1/\a}}^{\infty}\nu(c_2^{1/\a},y)\frac{dy}{y}. \\
\end{align*}

\end{thm}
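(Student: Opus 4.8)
The plan is to reduce everything to the joint asymptotics of the four-dimensional vector
\[
\Xi_n:=\sqrt{k_1}\Big(H^{X}_{k_1,n}-\tfrac1\a,\ L^{X}_{k_1,n}-\tfrac{2}{\a^2},\ H^{Y}_{k_2,n}-\tfrac1\a,\ L^{Y}_{k_2,n}-\tfrac{2}{\a^2}\Big),
\]
together with the first-order stochastic expansions of $M_{k,n}$ and $J_{k,n}$ recalled just above the statement. First I would prove that $\Xi_n\Rightarrow\mathcal N(0,\mathcal C)$ for an explicit $4\times4$ matrix $\mathcal C$. This is obtained exactly as Theorem \ref{HillGaussRand} and Theorem \ref{HillGaussDet} were: $H_{k,n}$ and $L_{k,n}$ are the images of the tail empirical process of Eq.~\eqref{tailprocess} under the continuous maps $w\mapsto\int_1^\infty w\,\tfrac{dx}{x}$ and $w\mapsto 2\int_1^\infty w\,\log x\,\tfrac{dx}{x}$, so Corollary \ref{corGaussField} and the continuous mapping theorem yield a Gaussian limit with a random centring (as in Eq.~\eqref{hillint}, now with the $\log$-weighted coordinates appended); the random centrings are then replaced by $1/\a$ and $2/\a^2$ through Conditions \eqref{uni2orderint}, \eqref{multi2order}, the von Mises conditions \eqref{vonmise} and the Bahadur--Kiefer linearisation of Lemmas \ref{linearisation}--\ref{riemansum}, exactly as in the proof of Theorem \ref{HillGaussDet}. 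The lower integration limits $c_i^{1/\a}$ appear, as in Corollary \ref{corHillGauss}, from $a^{(i)}(n/k_i)/a^{(i)}(n/k_1)\to c_i^{1/\a}$.

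Second, I would read off the entries of $\mathcal C$. They are of three types: the Hill--Hill block $\C[\sqrt{k_1}(H^X-\tfrac1\a),\sqrt{k_1}(H^Y-\tfrac1\a)]\to\nu(c_1^{1/\a},c_2^{1/\a})/\a^2$ from Corollary \ref{corHillGauss}; the $L$--$L$ block, which is the bilateral version of $\Theta_{i,j}$ in Eq.~\eqref{Lrand} (the lower limits $1$ replaced by $c_i^{1/\a}$); and the Hill--$L$ block, obtained by re-running the computations of Lemmas \ref{logorder}, \ref{limrap}, \ref{riemansum} and \ref{bias1}--\ref{bias2} with an extra factor $\log$ in the integrands, which gives a combination of $\int_{c_1^{1/\a}}^{\infty}\nu(x,c_2^{1/\a})\,\tfrac{dx}{x}$, $\int_{c_2^{1/\a}}^{\infty}\nu(c_1^{1/\a},y)\,\tfrac{dy}{y}$ and $\nu(c_1^{1/\a},c_2^{1/\a})$ with explicit powers of $\a$ (Lemma \ref{simplification} being used to collapse the double integrals). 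Once $\mathcal C$ is known, each of the six limits in the statement is the bilinear form ${}^{t}\mathbf a\,\mathcal C\,\mathbf b$, where $\mathbf a,\mathbf b$ range over the coefficient vectors $(1,0)$, $(1-2/\a,\a^2/2)$ and $(-1,2/\a)$ that the expansions above attach to $H$, $M$ and $J$ respectively; substituting and simplifying produces the displayed expressions.

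A couple of points require attention. The statement asserts convergence of the rescaled cross-moments, not merely convergence in distribution, so the weak convergence must be upgraded by a uniform-integrability argument for the products $k(\cdot)(\cdot)$; this is exactly the mechanism used in the proof of Theorem \ref{HillGaussDet}, resting on the $L^2$ control of the remainder terms in Lemma \ref{orders} and on $\E[\log^2 X(i)]<\infty$, and it also shows that the $o_{\mathbb P}(1)$ terms in the expansions of $M$ and $J$ are in fact $o(1)$ in quadratic mean, hence do not contribute to the limiting cross-moments. One must also justify those expansions to the stated order: this is a Taylor expansion of $M_{k,n}$ and $J_{k,n}$ around $(H_{k,n},L_{k,n})=(1/\a,2/\a^2)$, the only care being that $L_{k,n}$ and $1-H_{k,n}^2/L_{k,n}$ stay bounded away from $0$ with probability tending to one, which follows from the joint consistency of $H_{k,n}$ and $L_{k,n}$.

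\textbf{Main obstacle.} The genuinely laborious step is the explicit computation of the mixed Hill--$L$ and $L$--$L$ covariance entries with the correct $c_1,c_2$ boundary values and the correct correction stemming from the correlation between each estimator and its random centring: this forces one to redo the whole chain of Lemmas \ref{logorder}--\ref{riemansum} with a $\log$-weighted integrand and to keep careful track of every boundary term, which is where errors are most likely to creep in.
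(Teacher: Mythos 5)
Your proposal follows essentially the same route as the paper: a joint central limit theorem for the $(H,L)$ coordinates obtained from the tail empirical process (the $L$ coordinate being the $\log$-weighted integral, with the random centring removed via Conditions \eqref{uni2orderint}, \eqref{multi2order}, \eqref{vonmise} and the Bahadur--Kiefer lemmas), followed by the first-order expansions of $M_{k,n}$ and $J_{k,n}$ in $(H_{k,n},L_{k,n})$ and bilinearity to read off each cross-moment limit. If anything, you are more careful than the paper on two points it leaves implicit --- upgrading weak convergence to convergence of the rescaled cross-moments via uniform integrability, and justifying the Taylor expansions by keeping $1-H_{k,n}^2/L_{k,n}$ bounded away from zero --- so the plan is sound.
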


\bibliographystyle{plainnat}
\bibliography{bibliographieAbrev}

\end{document}